\newtheorem{thm}{Theorem}[section]
\newtheorem{cor}[thm]{Corollary}
\newtheorem{lem}[thm]{Lemma}
\newtheorem{prop}[thm]{Proposition}
\theoremstyle{definition}
\newtheorem{defn}[thm]{Definition}
\newtheorem{rem}[thm]{Remark}
\newtheorem{prob}[thm]{Problem}
\newtheorem{example}[thm]{Example}
\numberwithin{equation}{section}
\newcommand{\nat}{\mathbb{N}}
\newcommand{\Nat}{\mathbb{N}}
\def\epsilon{\varepsilon}
\newcommand{\sub}{\subseteq}
\newcommand{\erre}{\mathbb R}
\title{The free Banach lattice generated by a Banach space}
\author[A. Avil\'es]{Antonio Avil\'es}
\address{Departamento de Matem\'{a}ticas, Facultad de Matem\'{a}ticas, Universidad de Murcia, 30100 Espinardo (Murcia), Spain} 
\email{avileslo@um.es}
\author[J. Rodr\'{i}guez]{Jos\'e Rodr\'{i}guez}
\address{Departamento de Ingenier\'{i}a y Tecnolog\'{i}a de Computadores,
Facultad de Inform\'{a}tica, Universidad de Murcia, 30100 Espinardo (Murcia), Spain}  
\email{joserr@um.es}
\author[P. Tradacete]{Pedro Tradacete}
\address{Departamento de Matem\'{a}ticas, Universidad Carlos III de Madrid, 28911 Legan\'es (Madrid), Spain}
\email{ptradace@math.uc3m.es}
\subjclass[2010]{46B42, 46B50}
\keywords{Banach lattice; free lattice; Nakano property; order interval; weakly compactly generated space}
\begin{document}

\setcounter{tocdepth}{1}
\date{\today}

\begin{abstract}
The free Banach lattice over a Banach space is introduced and analyzed. This generalizes the concept of free Banach lattice over a set of generators, and allows us to study the Nakano property and the density character of non-degenerate intervals on these spaces, answering some recent questions of B. de Pagter and A.W. Wickstead. 
Moreover, an example of a Banach lattice which is weakly compactly generated as a lattice but not as a Banach space is exhibited, thus answering a question of J. Diestel.
\end{abstract}

\maketitle

\section{Introduction}

The purpose of this paper is to introduce the free Banach lattice generated by a Banach space and investigate its properties. The free Banach lattice generated by a 
set $A$ with no extra structure, which is denoted by $FBL(A)$, has been recently introduced and analyzed by B.~de Pagter and A.W.~Wickstead in \cite{dePW}.
Namely, $FBL(A)$ is a Banach lattice together with a bounded map $u:A \to FBL(A)$ having the following universal property: for every
Banach lattice $Y$ and every bounded map $v:A \to Y$ there is a unique lattice homomorphism 
$S: FBL(A)\to Y$ such that $S\circ u=v$ and $\|S\|=\sup\{\|v(a)\|:a\in A\}$. Our aim here is to 
provide an analogous construction replacing the set $A$ by a Banach space $E$, in a way that the resulting 
free Banach lattice behaves well with respect to the Banach space structure of $E$. 

In the absense of topology, the free vector lattice generated by a set $A$ was previously considered in \cite{Baker,Bleier}
and can be characterized as certain sublattice of~$\mathbb R^{\mathbb R^A}$. Constructing the free Banach lattice $FBL(A)$ 
becomes tantamount to finding the largest possible lattice norm that the free vector lattice over $A$ can carry. 
Among other things, the existence of such a norm is proved in \cite{dePW}. However, one can provide an explicit form of the norm of $FBL(A)$ (see Corollary \ref{c:FBL(A)}). 

Loosely speaking, the free Banach lattice $FBL[E]$ generated by a Banach space~$E$ is a Banach lattice which 
contains a subspace linearly isometric to~$E$ in a way that its elements work as lattice-free generators. 
In other words, the subspace of generators has two properties: first, the generators carry no lattice relation among them 
(except for the linear and metric ones coming from~$E$), and second, the sublattice spanned by these generators is dense in the free Banach lattice. 
To be more precise, if $\phi_E$ stands for the canonical isometric embedding of~$E$ into~$FBL[E]$, then the universal property of~$FBL[E]$ reads as follows:
for every Banach lattice $X$ and every operator $T:E\rightarrow X$ there exists a unique lattice homomorphism 
$\hat T:FBL[E]\rightarrow X$ such that $\|\hat T\|=\|T\|$ and $\hat T\circ \phi_E=T$, i.e. the following diagram commutes:
$$
	\xymatrix{E\ar_{\phi_E}[d]\ar[rr]^T&&X\\
	FBL[E]\ar_{\hat{T}}[urr]&& }
$$ 

Section~\ref{section:description} is devoted to constructing the free Banach lattice~$FBL[E]$
generated by a Banach space $E$. The construction will be achieved by defining a norm on a sublattice 
of~$\mathbb R^{E^*}$. This explicit description of the norm in the free Banach lattice is a helpful tool to tackle some questions 
that were raised in~\cite{dePW}. It should come as no great surprise that as~$\ell_1(A)$ is the free Banach space over the set~$A$, 
then $FBL(A)=FBL[\ell_1(A)]$ (see Corollary \ref{c:FBL(A)}). In particular, the free 
Banach lattice generated by a Banach space can be thought of as a generalization of the free Banach lattices of the form~$FBL(A)$.  

In Sections~\ref{s:OrderIntervals} and~\ref{s:nakano} we discuss further properties of the free Banach lattice generated by a Banach space.
In \cite[Theorem 8.3]{dePW} the authors show that, given any infinite set~$A$, the smallest cardinal $\mathfrak a$ such that 
every order interval in $FBL(A)$ has density character at most $\mathfrak a$ is $|A|$ (the cardinality of~$A$). 
They ask whether all non-degenerate order intervals in $FBL(A)$ must have the same density character
(that would necessarily be equal to~$|A|$). We will see that this is indeed the case and, more generally, 
that for any Banach space~$E$, every non-degenerate order interval in $FBL[E]$ has the same density character as~$E$ (Theorem~\ref{t:orderintervals}).
Another intriguing question raised in~\cite{dePW} is whether the norm of a free Banach lattice
of the form~$FBL(A)$ must be Fatou, or even Nakano. We will show that this is indeed the case (Theorem~\ref{thm:Nakano}), while 
this property is not shared by all free Banach lattices generated by a Banach space (Theorem~\ref{t:nofatou}).

Finally, in Section~\ref{s:wcg} we revisit a question that J. Diestel raised 
during the conference ``Integration, Vector Measures and Related Topics IV'' held in La Manga del Mar Menor, Spain, 2011.
A Banach lattice $X$ is said to be {\em lattice weakly compactly generated} (LWCG for short) if there is a weakly compact set $L\sub X$ such that the
sublattice generated by~$L$ is dense in~$X$. This is formally weaker than being weakly compactly generated (WCG for short) 
as a Banach space, which means that there is a weakly compact set $K \sub X$ such that $X=\overline{{\rm span}}(K)$.

\begin{prob}[Diestel]\label{problem:Diestel}
Is every LWCG Banach lattice WCG?
\end{prob}

This and related questions have been recently investigated in~\cite{AGLRT}, where Problem~\ref{problem:Diestel} is solved 
affirmatively for Banach lattices which are order continuous or have weakly sequentially continuous lattice operations.
Here we will provide a negative answer to Diestel's question by showing 
that the free Banach lattice $FBL[\ell_2(\Gamma)]$ is LWCG but not WCG as long as $\Gamma$ is uncountable (Corollary~\ref{c:Diestel}). 

\subsection*{Terminology}
We only consider linear spaces over the real field. Given a Banach space~$E$, its norm is denoted by $\|\cdot\|_E$ or simply
$\|\cdot\|$ if no confussion arises. The closed unit ball and the unit sphere of~$E$ are denoted by~$B_E$ and~$S_E$, respectively. 
The linear subspace generated by a set $S \sub E$ is denoted by~${\rm span}(S)$ and its closure is denoted by $\overline{{\rm span}}(S)$.
The symbol $E^*$ stands for the (topological) dual of~$E$. Given a Banach lattice~$X$, we write $X_+=\{x\in X: x\geq 0\}$.
By an {\em operator} between Banach spaces we mean a linear continuous map.

\section{A description of the free Banach lattice}\label{section:description}

Throughout this section $E$ is a Banach space.
Our first aim is to show that the free Banach lattice generated by $E$ exists and provide an explicit description
(Theorem~\ref{t:fblb}).

We denote by $H[E]$ the linear subspace of $\erre^{E^*}$
consisting of all positively homogeneous functions $f:E^\ast \to \mathbb{R}$.
For any $f\in H[E]$ we define
$$
	\|f\|_{FBL[E]} := 
	\sup\left\{\sum_{k=1}^n |f(x_k^\ast)| : \, n\in\mathbb N, \, x_1^*,\dots,x_n^*\in E^*, \,  \sup_{x\in B_E} \sum_{k=1}^n |x_k^\ast(x)|\leq 1\right\}.
$$

\begin{rem}\label{rem:PointwiseBounded}
If $f\in H[E]$, then 
$$
	\|f\|_\infty:=\sup\big\{|f(x^*)|: \, x^*\in B_{E^*}\big\} \leq \|f\|_{FBL[E]}.
$$
\end{rem}

It is routine to check that $H_0[E]:=\{f\in H[E]: \|f\|_{FBL[E]}<\infty\}$
is a Banach lattice when equipped with the norm~$\|\cdot\|_{FBL[E]}$
and the pointwise lattice operations. 

\begin{defn}\label{definition:FBL}
Given any $x\in E$, let
$\delta_x\in H_0[E]$ be defined by 
$$
	\delta_x(x^\ast):= x^\ast(x) 
	\quad\mbox{for all }x^*\in E^*.
$$
We define $FBL[E]$ to be the closed sublattice of $H_0[E]$ generated by $\{\delta_x:x\in E\}$.
\end{defn}

The following lemma is straightforward.

\begin{lem}\label{l:isometry}
The mapping $\phi_E: E\rightarrow FBL[E]$ given by $\phi_E(x):=\delta_x$ defines a linear isometry between $E$ and its image in $FBL[E]$.
\end{lem}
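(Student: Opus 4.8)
The plan is to verify two things: that $\phi_E$ is linear, and that it is isometric, i.e.\ $\|\delta_x\|_{FBL[E]} = \|x\|_E$ for all $x \in E$.

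Linearity is immediate from the definition, since $\delta_{x+y}(x^\ast) = x^\ast(x+y) = x^\ast(x) + x^\ast(y) = \delta_x(x^\ast) + \delta_y(x^\ast)$ and similarly for scalar multiples; the map $x \mapsto \delta_x$ simply transports the evaluation pairing. The content is therefore entirely in the norm computation, so I would focus there.

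**For the isometry,** I would unwind the definition of $\|\delta_x\|_{FBL[E]}$ directly. By construction,
$$
	\|\delta_x\|_{FBL[E]} = \sup\left\{\sum_{k=1}^n |x_k^\ast(x)| : \, n\in\mathbb N, \, x_1^*,\dots,x_n^*\in E^*, \, \sup_{y\in B_E} \sum_{k=1}^n |x_k^\ast(y)| \leq 1\right\}.
$$
I would establish the two inequalities separately. For $\|\delta_x\|_{FBL[E]} \leq \|x\|_E$: if the functionals $x_1^\ast,\dots,x_n^\ast$ satisfy the constraint $\sup_{y\in B_E}\sum_k |x_k^\ast(y)| \leq 1$, then choosing $y$ so that each $x_k^\ast(y)$ has the appropriate sign should let me bound $\sum_k |x_k^\ast(x)|$ by $\|x\|_E$; more directly, for the single scaled vector $x/\|x\|_E \in B_E$ the constraint already forces $\sum_k |x_k^\ast(x)| \leq \|x\|_E$. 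For the reverse inequality $\|\delta_x\|_{FBL[E]} \geq \|x\|_E$, I would take $n=1$ and pick, via Hahn--Banach, a norming functional $x_1^\ast \in S_{E^\ast}$ with $x_1^\ast(x) = \|x\|_E$; this single functional satisfies $\sup_{y\in B_E}|x_1^\ast(y)| = \|x_1^\ast\| = 1$, so it is admissible and contributes $|x_1^\ast(x)| = \|x\|_E$ to the supremum.

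**The only genuine subtlety** is checking that $\delta_x$ actually lies in $H_0[E]$ (equivalently that its norm is finite), but this is subsumed by the upper bound $\|\delta_x\|_{FBL[E]} \leq \|x\|_E < \infty$ established above; that $\delta_x$ is positively homogeneous is clear since $x^\ast \mapsto x^\ast(x)$ is in fact linear. Once both inequalities are in place we get $\|\delta_x\|_{FBL[E]} = \|x\|_E$, so $\phi_E$ is an isometry onto its image, and since it is also linear the proof is complete. I expect no real obstacle here—the lemma is labelled straightforward precisely because the norm definition was engineered so that the constraint set restricted to a single admissible functional already reproduces the dual characterization of $\|x\|_E$.
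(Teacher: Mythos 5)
Your proof is correct and is precisely the straightforward argument the paper has in mind (the authors state the lemma without proof): the upper bound $\sum_k|x_k^\ast(x)|\leq\|x\|_E$ follows from testing the admissibility constraint at $x/\|x\|_E$, and the lower bound from a single Hahn--Banach norming functional. No gaps; the only omitted detail is the trivial case $x=0$.
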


\begin{thm}\label{t:fblb}
Let $X$ be a Banach lattice and $T:E\rightarrow X$ an operator. There is a unique lattice homomorphism $\hat T:FBL[E]\rightarrow X$ 
extending $T$ (in the sense that $\hat T \circ \phi_E=T$). Moreover, $\|\hat T\|=\|T\|$.
\end{thm}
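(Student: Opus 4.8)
The plan is to construct $\hat T$ first on the dense sublattice generated by the $\delta_x$ and then extend by continuity; uniqueness will be automatic, so essentially the entire content is the norm estimate $\|\hat T\|=\|T\|$. Since lattice and linear operations in $H_0[E]$ are computed pointwise and $\delta_x(x^*)=x^*(x)$, every element of the sublattice $L_0$ generated by $\{\delta_x:x\in E\}$ has the form $f(x^*)=\tilde p(x^*(x_1),\dots,x^*(x_n))$ for some $x_1,\dots,x_n\in E$ and some continuous positively homogeneous piecewise-linear $\tilde p:\erre^n\to\erre$ (that is, an element of the free vector lattice on $n$ generators). I would \emph{define} $\hat T(f):=\tilde p(Tx_1,\dots,Tx_n)$, the same lattice-linear expression evaluated in $X$. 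To see this is well defined, note that two such expressions produce the same $f$ exactly when the two functions $\tilde p$ agree on the subspace $V:=\{(x^*(x_k))_k:x^*\in E^*\}\sub\erre^n$; by Hahn--Banach $V$ is the annihilator of the linear relations $\sum_k\lambda_k x_k=0$, and since $T$ sends these relations to the same relations among the $Tx_k$, agreement on $V$ forces the two expressions to coincide in $X$. This makes $\hat T:L_0\to X$ a lattice homomorphism with $\hat T\circ\phi_E=T$, and it is the only candidate because $L_0$ is dense in $FBL[E]$ and a lattice homomorphism is determined by its values on the generators; hence uniqueness follows once $\hat T$ is shown to be bounded.

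The inequality $\|\hat T\|\ge\|T\|$ is immediate from $\hat T\circ\phi_E=T$ and Lemma~\ref{l:isometry}. For the reverse inequality I would first reduce to the case of an $L_1$-space. Using $\|\hat T f\|_X=\sup\{\phi(|\hat Tf|):\phi\in(X^*)_+,\ \|\phi\|\le 1\}$, fix a positive contractive functional $\phi$: it induces the seminorm $y\mapsto\phi(|y|)$ whose kernel is an ideal, so the associated quotient completes to an AL-space $L_1(\mu_\phi)$, and the quotient map $Q_\phi:X\to L_1(\mu_\phi)$ is a lattice homomorphism with $\|Q_\phi y\|=\phi(|y|)$ and $\|Q_\phi\|\le 1$. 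Since $Q_\phi$ preserves lattice-linear expressions, $Q_\phi\hat T f=\widehat{Q_\phi T}(f)$ on $L_0$, whence $\phi(|\hat T f|)=\|\widehat{Q_\phi T}(f)\|_{L_1(\mu_\phi)}$. It therefore suffices to prove $\|\hat T f\|_{L_1(\mu)}\le\|T\|\,\|f\|_{FBL[E]}$ when $X=L_1(\mu)$, and then apply this to the operator $Q_\phi T$, whose norm is at most $\|T\|$.

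In $X=L_1(\mu)$ the estimate is essentially built into the definition of $\|\cdot\|_{FBL[E]}$. Writing $g_k:=Tx_k$, the map $\omega\mapsto(g_1(\omega),\dots,g_n(\omega))$ pushes $\mu$ forward to a measure $\lambda$ on $\erre^n$ concentrated on $V$ (again because $\int|\sum_k\lambda_k g_k|\,d\mu=\|T(\sum_k\lambda_k x_k)\|$ vanishes on the relations), and $\|\hat T f\|_{L_1}=\int|\tilde p|\,d\lambda$. I would cut $\erre^n$ into the finitely many polyhedral cones on which $\tilde p$ is linear and of constant sign, let $A_j$ be the $\mu$-preimages of those cones, and set $z_j^*:=T^*(\mathbf 1_{A_j})\in E^*$. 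Linearity of $\tilde p$ on each cone together with its positive homogeneity gives $\int|\tilde p|\,d\lambda=\sum_j|f(z_j^*)|$, while the disjointness of the $A_j$ yields $\sup_{x\in B_E}\sum_j|z_j^*(x)|\le\sup_{x\in B_E}\int|Tx|\,d\mu\le\|T\|$. Thus $\{z_j^*\}$ is an admissible configuration in the definition of the norm, and $\|\hat T f\|_{L_1}=\sum_j|f(z_j^*)|\le\|T\|\,\|f\|_{FBL[E]}$; taking the supremum over $\phi$ completes the bound.

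I expect the main obstacle to be precisely this last passage: for each positive functional on $X$ one must produce a \emph{finite} family $z_j^*$ that simultaneously reproduces $\phi(|\hat T f|)$ as $\sum_j|f(z_j^*)|$ and satisfies the $\ell_1$-type constraint $\sup_{B_E}\sum_j|z_j^*(x)|\le\|T\|$. A naive choice --- for instance norm-preserving Hahn--Banach lifts of barycenters of $\lambda$ --- fails the constraint, because the triangle inequality discards the cancellation that $\phi$ encodes (a concrete warning sign is that $\||Tx_1|+|Tx_2|\|$ can genuinely require the norm $\|f\|_{FBL[E]}$ to be realized by many functionals with carefully chosen signs, not by a single one). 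The decisive point is that factoring through $L_1(\mu_\phi)$ replaces these lifts by the mutually \emph{disjointly supported} functionals $T^*(\mathbf 1_{A_j})$, for which the constraint holds automatically. Once this mechanism is identified, the cone decomposition of $\tilde p$, the reduction via positive functionals, and the verification of well-definedness are all routine.
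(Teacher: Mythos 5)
Your proof is correct, but it takes a genuinely different route from the paper's. The paper also begins from the free vector lattice over the set $E$, which is the clean way to justify your well-definedness step: your appeal to ``agreement on $V$ forces the two expressions to coincide in $X$'' is really the Judin/Krivine functional calculus (evaluation of a lattice--linear expression at $Tx_1,\dots,Tx_n$ depends only on its restriction to the joint spectrum, which lies in $V$ because $T$ preserves linear relations), and that fact should be cited rather than waved at. The real divergence is in the norm estimate. The paper never leaves the abstract lattice $X$: writing $f=\bigvee_i f_i-\bigvee_j g_j$, it fixes a positive $y^*\in B_{X^*}$, takes an arbitrary decomposition $y^*=\sum_k y_k^*$ into positive functionals, feeds the admissible family $x_k^*:=\|T\|^{-1}T^*(y_k^*)$ into the definition of $\|\cdot\|_{FBL[E]}$, and recovers $y^*\big(\bigvee_i u_i\big)$ by taking the supremum over all such decompositions via the Riesz--Kantorovich formula. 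You instead factor through the AL-space $L_1(\mu_\phi)$ and replace the abstract decomposition of $y^*$ by the concrete partition $\mathbf{1}=\sum_j\mathbf{1}_{A_j}$ coming from the cones of linearity of $\tilde p$, obtaining the \emph{exact} identity $\phi(|\hat T f|)=\sum_j|f(z_j^*)|$ rather than a one-sided bound. Both arguments turn on the same mechanism: the $\ell_1$-constraint holds because $\sum_k|y_k^*(Tx)|\le y^*(|Tx|)$ for a positive decomposition, which is precisely what the disjointness of your $A_j$ encodes. The paper's version avoids Kakutani's representation theorem and all measure theory, at the price of the somewhat opaque supremum-over-decompositions step; yours buys a sharper geometric picture (in each AL-quotient the norm is attained by an explicit admissible family) at the cost of invoking the representation of AL-spaces and some routine care with the measure space --- e.g.\ one should restrict to the closed ideal generated by $Q_\phi Tx_1,\dots,Q_\phi Tx_n$ so that $\mu_\phi$ may be taken finite and the $\mathbf{1}_{A_j}$ are honest elements of $L_1(\mu_\phi)^*$. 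With those two standard facts supplied, your argument is complete.
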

\begin{proof}
Recall that the {\em free vector lattice} over the set~$E$, denoted by~$FVL(E)$, is the 
vector sublattice of~$\erre^{\erre^E}$ generated by the family $\{\eta_{x}: x\in E\}$, where
$$
	\eta_x(f):=f(x) \quad\mbox{for all }f\in \erre^E
$$
(\cite{Bleier}, cf. \cite[Theorem 3.6]{dePW}). Thus, the 
mapping $\phi_E:E \to FBL[E]$ of Lemma~\ref{l:isometry} induces a lattice homomorphism $\varphi: FVL(E) \to FBL[E]$
such that $\varphi(\eta_x)=\delta_x$ for all $x \in E$. In particular, $\varphi$ has dense range. The 
universal property of~$FVL(E)$ can be used again to obtain
a lattice homomorphism $\tilde T:FVL(E)\rightarrow X$ such that $\tilde T(\eta_{x})=T(x)$ for all $x\in E$.

{\em Claim.} For every $f\in FVL(E)$ we have
\begin{equation}\label{eqn:Domination}
	\|\tilde T(f)\|_X \leq \| T\| \|\varphi(f)\|_{FBL[E]}.
\end{equation}  
Once the claim is proved the proof of the theorem finishes as follows. Inequality~\eqref{eqn:Domination} and
the density of $\varphi(FVL(E))$ in~$FBL[E]$ allow us to define an operator $\hat T: FBL[E] \to X$ with $\|\hat T\|\leq \|T\|$ such that $\hat T \circ \varphi =\tilde T$.
Since $\varphi$ and~$\tilde T$ are lattice homomorphisms, so is $\hat T$. Clearly, $\hat T \circ \phi_E=T$.
Moreover, we have $\|\hat T\|=\|T\|$, because $\|T(x)\| =\|\hat T(\delta_x)\| \leq \|\hat T\| \|\delta_x\|_{FBL[E]}=\|\hat T\|\|x\|$ for every $x\in E$.
For the uniqueness of~$\hat T$, bear in mind that any lattice homomorphism from~$FBL[E]$ to a Banach lattice is uniquely determined
by its values in $\{\delta_x:x\in E\}$.

{\em Proof of Claim.} The case $T=0$ is trivial, so we assume that $T\neq 0$. Fix $f\in FVL(E)$. Actually, $f$ belongs to the sublattice of~$FVL(E)$ generated by 
$\{\eta_{x_1},\dots, \eta_{x_n}\}$ for some finite set $\{x_1,\ldots, x_n\}\sub E$. 
By \cite[Ex. 8, p. 204]{AB}, we can write 
$$
	f=\bigvee_{i=1}^m f_i-\bigvee_{j=1}^p g_j
$$
for some $f_1,\dots,f_m, g_1,\dots,g_p$ in ${\rm span}(\{\eta_{x_1},\ldots,\eta_{x_n}\})$.
Write $f_i=\sum_{l=1}^{n}\lambda_l^i\eta_{x_l}$ and $g_j= \sum_{l=1}^{n}\mu_l^j\eta_{x_l}$ 
for some $\lambda_l^i,\mu_l^j \in \erre$. Since $\tilde T$ is a lattice homomorphism, we have 
\begin{equation}\label{eqn:Ttilde}
	\tilde T(f)=\bigvee_{i=1}^m u_i-\bigvee_{j=1}^p v_j
\end{equation}
where $u_i:=\sum_{l=1}^{n}\lambda_l^iT(x_l)$ and $v_j:=\sum_{l=1}^{n}\mu_l^jT(x_l)$.

Since $\tilde T$ and $\varphi$ are lattice homomorphisms, it suffices to check~\eqref{eqn:Domination}
in the particular case that $f\geq 0$. In this case, \eqref{eqn:Domination} is equivalent to
the fact that $y^*(\tilde T(f)) \leq \|T\|\|\varphi(f)\|_{FBL[E]}$ 
for every $y^* \in B_{X^*}\cap (X^*)_+$ (bear in mind that $\tilde{T}(f)\geq 0$).
Fix $y^* \in B_{X^*}\cap (X^*)_+$.
Take an arbitrary decomposition $y^*=\sum_{k=1}^m y_k^*$ where
$y_1^*,\dots,y_m^*\in (X^*)_+$. Let us define $x_k^*:=\|T\|^{-1} T^*(y_k^*)\in E^*$ for every $k\in \{1,\dots,m\}$, which satisfy 
$$
	\sup_{x\in B_E} \sum_{k=1}^m |x_k^*(x)|
	\leq
	\sup_{x\in B_E} \sum_{k=1}^m y_k^*\Big(\frac{1}{\|T\|}|T(x)|\Big)
	=
	\sup_{x\in B_E} y^*\Big(\frac{1}{\|T\|}|T(x)|\Big)\leq\|y^*\|\leq1.
$$
Hence, 
\begin{eqnarray*}
	\|\varphi(f)\|_{FBL[E]}&\geq&\sum_{k=1}^m \varphi(f)(x^*_k)=\sum_{k=1}^m \Bigg(\bigvee_{i=1}^m \varphi(f_i)(x_k^*)-\bigvee_{j=1}^p \varphi(g_j)(x_k^*)\Bigg)\\
	&=&\frac1{\|T\|}\sum_{k=1}^m \Bigg(\bigvee_{i=1}^m y^*_k(u_i)-\bigvee_{j=1}^p y^*_k(v_j)\Bigg)\\
	&\geq&\frac1{\|T\|}\sum_{k=1}^m \Bigg(y^*_k(u_k)- y^*_k\Big(\bigvee_{j=1}^pv_j\Big)\Bigg)\\
	&=&\frac1{\|T\|}\Bigg(\sum_{k=1}^m y^*_k(u_k)- y^*\Big(\bigvee_{j=1}^pv_j\Big)\Bigg). 
\end{eqnarray*}
By the classical Riesz-Kantorovich formulas (cf. \cite[Theorem 1.18]{AB}) we have
$$
	y^*\Big(\bigvee_{k=1}^m u_k\Big)=\sup\Big\{\sum_{k=1}^m y^*_k(u_k): \, y_k^*\in (X^*)_+, \,y^*=\sum_{k=1}^m y^*_k\Big\}.
$$
Hence, by taking supremum over all such decompositions of~$y^*$, the above inequalities yield
\begin{equation}\label{eqn:final}
	y^*(\tilde T(f))\stackrel{\eqref{eqn:Ttilde}}{=}y^*\Big(\bigvee_{i=1}^m u_i-\bigvee_{j=1}^p v_j\Big)\leq \|T\|\|\varphi(f)\|_{FBL[E]},
\end{equation}
as desired. The proof is complete. 
\end{proof}

\begin{cor}\label{cor:FBLofBanachLattice}
If $E$ is a Banach lattice, then it is the range of a lattice projection $P:FBL[E]\rightarrow E$.
\end{cor}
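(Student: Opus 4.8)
The plan is to obtain $P$ directly from the universal property of $FBL[E]$ established in Theorem~\ref{t:fblb}, without any further construction. Since $E$ is now assumed to be a Banach lattice, the identity map $\mathrm{id}_E:E\to E$ is an operator from $E$ into a Banach lattice with $\|\mathrm{id}_E\|=1$. I would therefore apply Theorem~\ref{t:fblb} with $X=E$ and $T=\mathrm{id}_E$ to produce a lattice homomorphism $P:FBL[E]\to E$ satisfying $P\circ\phi_E=\mathrm{id}_E$ and $\|P\|=\|\mathrm{id}_E\|=1$.

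The identity $P\circ\phi_E=\mathrm{id}_E$ does all the work. First, it shows immediately that $P$ is surjective onto $E$: every $x\in E$ equals $P(\phi_E(x))$, so $E$ is the range of $P$. Second, under the canonical isometric identification of $E$ with the closed sublattice $\phi_E(E)\subseteq FBL[E]$ supplied by Lemma~\ref{l:isometry}, the map $P$ restricts to the identity on this copy of $E$. Equivalently, the lattice homomorphism $\phi_E\circ P:FBL[E]\to FBL[E]$ fixes $\phi_E(E)$ pointwise and is idempotent, since
\[
(\phi_E\circ P)\circ(\phi_E\circ P)=\phi_E\circ(P\circ\phi_E)\circ P=\phi_E\circ\mathrm{id}_E\circ P=\phi_E\circ P.
\]
Being a composition of lattice homomorphisms, $\phi_E\circ P$ is itself a lattice homomorphism, so it is a genuine lattice projection with range $\phi_E(E)\cong E$.

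I expect no real obstacle here: the statement is an immediate consequence of the universal property once one feeds it the identity operator. The only point requiring care is terminological, namely interpreting ``projection'' correctly, since $P$ is formally a map $FBL[E]\to E$ rather than an idempotent on $FBL[E]$. The computation above resolves this by confirming that composing with the embedding $\phi_E$ recovers an idempotent lattice homomorphism whose range is the isometric copy $\phi_E(E)$ of $E$, so that $E$ is a lattice-complemented subspace of $FBL[E]$ in the required sense.
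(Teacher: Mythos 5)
Your first paragraph is exactly the paper's proof: apply Theorem~\ref{t:fblb} to $T=\mathrm{id}_E$ and take $P:=\hat T$. That part is correct and complete for the statement as intended.

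The second paragraph, however, contains a genuine error. You assert that $\phi_E\circ P$ is a lattice homomorphism ``being a composition of lattice homomorphisms,'' and you refer to $\phi_E(E)$ as a closed \emph{sublattice} of $FBL[E]$. Neither is true. Lemma~\ref{l:isometry} only gives that $\phi_E$ is a linear isometry; it is not a lattice homomorphism, even when $E$ is a Banach lattice. Indeed, $(\delta_x\vee\delta_y)(x^*)=\max\{x^*(x),x^*(y)\}$ is not linear in $x^*$, so it is not of the form $\delta_z$; for instance in $E=\mathbb{R}^2$ with the coordinatewise order, $x=(1,0)$, $y=(0,1)$ and $x^*=(1,-1)$ give $\delta_{x\vee y}(x^*)=0$ while $(\delta_x\vee\delta_y)(x^*)=1$. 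This is the whole point of the free construction: the generators $\delta_x$ carry no lattice relations, so $\phi_E(E)$ is a closed subspace but not a sublattice, and $\phi_E\circ P$ is a bounded idempotent operator on $FBL[E]$ (so $E$ is complemented as a Banach space) but not a lattice homomorphism. The corollary should be read as asserting that $P$ itself is a norm-one lattice homomorphism with $P\circ\phi_E=\mathrm{id}_E$, i.e.\ a lattice-homomorphic retraction of $FBL[E]$ onto $E$; it does not claim that $E$ embeds as a lattice-complemented sublattice of $FBL[E]$. Dropping the second paragraph leaves a correct proof identical to the paper's.
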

\begin{proof}
Apply Theorem \ref{t:fblb} to the identity $T:E\rightarrow E$ to get $P:=\hat T$.
\end{proof}

\begin{cor}\label{cor:RealHomomorphisms}
A functional $\varphi\in FBL[E]^*$ is a lattice homomorphism if and only if there is $x^*\in E^*$ such that
$\varphi(f)= f(x^*)$ for all $f\in FBL[E]$.
\end{cor}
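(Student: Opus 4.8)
The plan is to treat the two implications separately: the reverse implication is a pointwise computation, while the forward implication is an immediate consequence of the uniqueness assertion in Theorem~\ref{t:fblb}.

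First I would establish that every evaluation functional is a bounded lattice homomorphism. Fix $x^* \in E^*$ and let $\psi(f) := f(x^*)$. Since the lattice operations on $FBL[E]$ are those inherited pointwise from $H_0[E] \subseteq \erre^{E^*}$, we have $(f \vee g)(x^*) = \max\{f(x^*), g(x^*)\}$ and likewise for $\wedge$, so $\psi$ preserves the lattice operations. For boundedness I would use positive homogeneity together with Remark~\ref{rem:PointwiseBounded}: if $x^* \neq 0$ then $|f(x^*)| = \|x^*\| \, |f(x^*/\|x^*\|)| \leq \|x^*\| \, \|f\|_\infty \leq \|x^*\| \, \|f\|_{FBL[E]}$, the case $x^* = 0$ being trivial. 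Hence $\psi \in FBL[E]^*$ is a lattice homomorphism, which proves the \emph{if} direction.

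For the converse, suppose $\varphi \in FBL[E]^*$ is a lattice homomorphism, which is in particular a lattice homomorphism from $FBL[E]$ into the Banach lattice $\erre$. I would set $x^* := \varphi \circ \phi_E$, which lies in $E^*$ because $\phi_E$ is an isometry (Lemma~\ref{l:isometry}) and $\varphi$ is bounded; explicitly, $x^*(x) = \varphi(\delta_x)$. Let $\psi(f) := f(x^*)$ be the corresponding evaluation functional, a lattice homomorphism by the previous paragraph. Both $\varphi$ and $\psi$ are then lattice homomorphisms from $FBL[E]$ into $\erre$ extending the operator $x^* \colon E \to \erre$, since $\psi(\delta_x) = \delta_x(x^*) = x^*(x) = \varphi(\delta_x)$ for every $x \in E$.

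Finally I would invoke the uniqueness part of Theorem~\ref{t:fblb}: a lattice homomorphism out of $FBL[E]$ is determined by its values on $\{\delta_x : x \in E\}$, because this set generates $FBL[E]$ as a closed sublattice and lattice homomorphisms are continuous and preserve the lattice operations. As $\varphi$ and $\psi$ agree on every $\delta_x$, they coincide on the sublattice these elements generate, and hence, by continuity, on its closure $FBL[E]$. Therefore $\varphi = \psi$, that is, $\varphi(f) = f(x^*)$ for all $f \in FBL[E]$. I do not expect a serious obstacle here; the only point requiring care is the boundedness estimate for $\psi$, which is exactly what Remark~\ref{rem:PointwiseBounded} is designed to supply, while the whole content of the forward direction is packaged into the universal property.
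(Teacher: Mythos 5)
Your proposal is correct and follows essentially the same route as the paper: the evaluation functional $\varphi_{x^*}$ is a lattice homomorphism because the lattice operations are pointwise (with boundedness supplied by positive homogeneity and Remark~\ref{rem:PointwiseBounded}), and the converse sets $x^*:=\varphi\circ\phi_E$ and invokes the uniqueness part of Theorem~\ref{t:fblb}. The only difference is that you spell out the norm estimate and the density argument that the paper leaves implicit.
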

\begin{proof}
Given any $x^*\in E^*$, the evaluation functional 
$$
	\varphi_{x^*}:FBL[E]\to \erre, \quad
	\varphi_{x^*}(f):=f(x^*),
$$
is obviously a lattice homomorphism. Conversely, suppose $\varphi\in FBL[E]^*$ is a lattice homomorphism. 
Then $x^*:=\varphi\circ \phi_E$ belongs to $E^*$. Since
$\varphi$ and $\varphi_{x^*}$ are lattice homomorphisms
such that $\varphi\circ \phi_E=\varphi_{x^*}\circ \phi_E$, the uniqueness part of
Theorem~\ref{t:fblb} yields that $\varphi=\varphi_{x^*}$.
\end{proof}

In the spirit of \cite[Corollary 4.10]{dePW}, we have the following:

\begin{cor}\label{cor:subspace}
Let $F\sub E$ be a closed subspace which is complemented by a contractive projection. Then:
\begin{enumerate}
\item[(i)] $FBL[F]$ is isometrically order isomorphic to a closed sublattice of~$FBL[E]$.
\item[(ii)] $FBL[F]^*$ is isometrically order isomorphic to a $w^*$-closed band of $FBL[E]^*$.
\end{enumerate}
\end{cor}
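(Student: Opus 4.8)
The plan is to realize both the inclusion $F\hookrightarrow E$ and the given projection $E\to F$ at the level of free Banach lattices by means of the universal property in Theorem~\ref{t:fblb}, and then to pass to adjoints for part~(ii). Write $j\colon F\to E$ for the canonical (isometric) inclusion and let $Q\colon E\to F$ be the given contractive projection, so that $Q\circ j=\mathrm{id}_F$ and $\|Q\|\le 1$. Applying Theorem~\ref{t:fblb} to the operator $\phi_E\circ j\colon F\to FBL[E]$ produces a lattice homomorphism $\hat{j}\colon FBL[F]\to FBL[E]$ with $\|\hat{j}\|=\|\phi_E\circ j\|=1$ and $\hat{j}\circ\phi_F=\phi_E\circ j$, while applying it to $\phi_F\circ Q\colon E\to FBL[F]$ produces a lattice homomorphism $\hat{Q}\colon FBL[E]\to FBL[F]$ with $\|\hat{Q}\|\le 1$ and $\hat{Q}\circ\phi_E=\phi_F\circ Q$. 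First I would check that $\hat{Q}\circ\hat{j}=\mathrm{id}_{FBL[F]}$: indeed $\hat{Q}\circ\hat{j}\circ\phi_F=\hat{Q}\circ\phi_E\circ j=\phi_F\circ Q\circ j=\phi_F$, so $\hat{Q}\circ\hat{j}$ is a lattice homomorphism agreeing with the identity on the generators $\{\delta_x:x\in F\}$, whence it equals $\mathrm{id}_{FBL[F]}$ by the uniqueness clause of Theorem~\ref{t:fblb}.

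For (i), the identity $\hat{Q}\circ\hat{j}=\mathrm{id}$ together with $\|\hat{j}\|\le 1$ and $\|\hat{Q}\|\le 1$ forces $\|f\|=\|\hat{Q}\hat{j}(f)\|\le\|\hat{j}(f)\|\le\|f\|$ for every $f\in FBL[F]$, so $\hat{j}$ is an isometry. Being an injective lattice homomorphism it reflects positivity (using that $\hat{j}(f)^{-}=\hat{j}(f^{-})$ and that $\hat{j}$ is injective), hence it is an isometric order isomorphism onto its image, and that image is a closed sublattice of $FBL[E]$.

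For (ii), I would dualize. From $\hat{Q}\circ\hat{j}=\mathrm{id}$ we get $\hat{j}^{*}\circ\hat{Q}^{*}=\mathrm{id}_{FBL[F]^*}$, so $\hat{Q}^{*}$ is injective and, exactly as above, $\|\hat{Q}^{*}\psi\|=\|\psi\|$ for all $\psi$; thus $\hat{Q}^{*}$ is an isometric embedding. Both $\hat{Q}^{*}$ and $\hat{j}^{*}$ are positive (adjoints of positive operators), and since $\hat{j}^{*}$ restricted to the range of $\hat{Q}^{*}$ is a positive inverse of $\hat{Q}^{*}$, the map $\hat{Q}^{*}$ is an isometric order isomorphism from $FBL[F]^*$ onto its range. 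It remains to identify this range and show it is a $w^*$-closed band. Here I would use that $\hat{Q}$ is surjective (it has $\hat{j}$ as a right inverse): by the standard duality for surjective operators, $\hat{Q}^{*}(FBL[F]^*)=(\ker\hat{Q})^{\perp}$, which is automatically $w^*$-closed. Finally, $\ker\hat{Q}$ is a closed ideal of $FBL[E]$, being the kernel of a lattice homomorphism, and the annihilator of an ideal is always a band in the dual (it is solid and order closed, as one checks directly using the Riesz--Kantorovich formula $|x^*|(x)=\sup\{x^*(y):|y|\le x\}$ for $x\ge 0$); hence $(\ker\hat{Q})^{\perp}$ is a $w^*$-closed band.

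The routine parts are the norm computations and the verification on generators, which follow mechanically from Theorem~\ref{t:fblb}. The one step deserving genuine care is the band assertion in~(ii). A tempting shortcut is to argue that $P:=\hat{Q}^{*}\hat{j}^{*}=(\hat{j}\hat{Q})^{*}$ is a band projection, but this fails in general: $\hat{j}\hat{Q}$ is a contractive positive projection and a lattice homomorphism, yet it need not satisfy $\hat{j}\hat{Q}\le\mathrm{id}$, so $P$ need not be an order projection even though its range is a band. The correct route is therefore the one above, identifying the range intrinsically as the annihilator $(\ker\hat{Q})^{\perp}$ of a closed ideal and invoking the general fact that such annihilators are $w^*$-closed bands.
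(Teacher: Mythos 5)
Your construction of $\hat j$ and $\hat Q$ via Theorem~\ref{t:fblb} and the identity $\hat Q\circ\hat j=\mathrm{id}_{FBL[F]}$ is exactly the paper's argument (with $\hat S,\hat T$ in place of $\hat j,\hat Q$), and part~(i) is handled identically. For part~(ii) the paper simply invokes \cite[Proposition 4.9]{dePW} for the contractive lattice-homomorphism projection onto the sublattice, whereas you correctly prove that duality statement from scratch, identifying the range of $\hat Q^{*}$ with $(\ker\hat Q)^{\perp}$ and checking that the annihilator of the closed ideal $\ker\hat Q$ is a $w^{*}$-closed band.
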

\begin{proof}
Let $P:E\rightarrow F$ be a contractive projection. Consider 
$$
	T:=\phi_F\circ P:E\rightarrow FBL[F]
$$ 
and let $\hat T:FBL[E]\rightarrow FBL[F]$ be the unique lattice homomorphism extending~$T$, which
satisfies $\|\hat T\|=\|T\|=\|P\|=1$ (Theorem~\ref{t:fblb}). Let $i:F \to E$ be the canonical inclusion, consider
$$
	S:=\phi_E\circ i:F\rightarrow FBL[E]
$$ 
and let $\hat S: FBL[F] \to FBL[E]$ be the unique lattice homomorphism extending~$S$,
which also satisfies $\|\hat S\|=\|S\|=\|i\|=1$ (Theorem~\ref{t:fblb}). For every $x\in F$ we have 
$$
	\hat S(\delta_x)=\phi_E(i(x))=\delta_x
	\quad\mbox{and}\quad \hat{T}(\delta_x)=\phi_F(P(x))=\phi_F(x)=\delta_x,
$$
so $\hat T \circ \hat S$ is the identity on~$FBL[F]$. It follows that $\hat S$ is an isometric embedding
(which yields statement~(i)) and that $\hat{T}$ is a contrative projection onto $FBL[F]$. Statement~(ii)
now follows from \cite[Proposition 4.9]{dePW}.
\end{proof}

In the particular case $E=\ell_1(A)$ the space $FBL[E]$ turns out to be the free
Banach lattice generated by the set~$A$ (in the sense of~\cite{dePW}), as we next show.  

\begin{cor}\label{c:FBL(A)}
Let $A$ be a non-empty set. Then:   
\begin{enumerate}
\item[(i)] For every $f\in H[\ell_1(A)]$ we have
$$
	\|f\|_{FBL(A)}=\sup\left\{\sum_{k=1}^n |f(x^*_k)| : \, n\in\mathbb N, \, x_1^*,\dots,x_n^* \in\ell_\infty(A), \, \sup_{a\in A} \sum_{k=1}^n |x_k^*(a)|\leq 1\right\}.
$$
\item[(ii)] $FBL[\ell_1(A)]$ is the closed sublattice of $H_0[\ell_1(A)]$ generated by the family $\{\delta_{e_a}:a\in A\}$, where
$(e_a)_{a\in A}$ is the unit vector basis of~$\ell_1(A)$.
\item[(iii)] The pair $(FBL[\ell_1(A)],i)$ is the free Banach lattice generated by~$A$, where $i:A\rightarrow FBL[\ell_1(A)]$ 
is the bounded map given by $i(a):=\delta_{e_a}$ for all $a\in A$.
\end{enumerate}
\end{cor}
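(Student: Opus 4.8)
The plan is to establish the three parts in the order (ii), (i), (iii), since part (ii) is what feeds the uniqueness argument in (iii), and the norm formula in (i) is cleanest once the defining supremum of Section~\ref{section:description} is specialized to $E=\ell_1(A)$. For (ii), I would exploit that $\phi_{\ell_1(A)}\colon x\mapsto\delta_x$ is a linear isometry (Lemma~\ref{l:isometry}) together with the density of ${\rm span}\{e_a:a\in A\}$ in $\ell_1(A)$. These two facts give $\delta_x\in\overline{{\rm span}}\{\delta_{e_a}:a\in A\}$ for every $x\in\ell_1(A)$, the closure taken in $FBL[\ell_1(A)]$. Since a closed sublattice is in particular a closed linear subspace, the closed sublattice $L$ generated by $\{\delta_{e_a}:a\in A\}$ contains every $\delta_x$, hence contains the closed sublattice generated by $\{\delta_x:x\in\ell_1(A)\}$, which is all of $FBL[\ell_1(A)]$ by Definition~\ref{definition:FBL}. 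As the reverse inclusion is trivial, $L=FBL[\ell_1(A)]$, proving (ii).

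For (i), I would specialize the formula defining $\|\cdot\|_{FBL[E]}$ to $E=\ell_1(A)$, using $\ell_1(A)^*=\ell_\infty(A)$. The only thing to check is that the constraint $\sup_{x\in B_{\ell_1(A)}}\sum_{k=1}^n|x_k^*(x)|\le1$ is the same as $\sup_{a\in A}\sum_{k=1}^n|x_k^*(a)|\le1$. For this, note that $p:=\sum_{k=1}^n|x_k^*(\cdot)|$ is a continuous seminorm on $\ell_1(A)$; expanding $x=\sum_a x(a)e_a$ in the $\ell_1$-norm and using subadditivity and positive homogeneity yields $p(x)\le\sum_a|x(a)|\,p(e_a)\le\|x\|_1\sup_{a\in A}p(e_a)$, so the supremum of $p$ over $B_{\ell_1(A)}$ is attained along the $e_a$ and equals $\sup_{a\in A}\sum_{k=1}^n|x_k^*(a)|$. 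I would also observe that the finite constraint forces $\|x_k^*\|_\infty\le1$ for each $k$, so it is immaterial whether the $x_k^*$ are taken in $\ell_\infty(A)$ or merely in $\erre^A$; this matches the displayed formula.

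Finally, for (iii) I would verify that $(FBL[\ell_1(A)],i)$ satisfies the universal property that defines the free Banach lattice over $A$. The bridge is that $\ell_1(A)$ is the free Banach space over $A$: any bounded map $v\colon A\to Y$ into a Banach lattice $Y$ extends uniquely to an operator $T\colon\ell_1(A)\to Y$ with $T(e_a)=v(a)$ and $\|T\|=\sup_{a\in A}\|v(a)\|$. Theorem~\ref{t:fblb} then produces the unique lattice homomorphism $\hat T\colon FBL[\ell_1(A)]\to Y$ with $\hat T\circ\phi_{\ell_1(A)}=T$ and $\|\hat T\|=\|T\|$; setting $S:=\hat T$ gives $S(i(a))=\hat T(\delta_{e_a})=T(e_a)=v(a)$ and $\|S\|=\sup_{a\in A}\|v(a)\|$. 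Uniqueness of $S$ is exactly where part (ii) enters: two lattice homomorphisms out of $FBL[\ell_1(A)]$ that agree on the generating set $\{\delta_{e_a}:a\in A\}$ must agree on the whole space. Thus the universal property holds, and by the standard uniqueness of objects defined by such a property, $(FBL[\ell_1(A)],i)$ is the free Banach lattice generated by $A$.

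I expect the only genuinely delicate step to be the seminorm computation in (i), in particular justifying both the reduction of the supremum over $B_{\ell_1(A)}$ to the vectors $e_a$ and the harmlessness of enlarging the range of the test functionals from $\ell_\infty(A)$ to $\erre^A$; the remaining parts are formal manipulations with the universal properties of $\ell_1(A)$ and of $FBL[\ell_1(A)]$.
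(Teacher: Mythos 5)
Your proposal is correct and follows essentially the same route as the paper: (ii) from Lemma~\ref{l:isometry} and the density of ${\rm span}\{e_a:a\in A\}$ in $\ell_1(A)$, (i) by specializing the defining supremum via $\ell_1(A)^*=\ell_\infty(A)$, and (iii) by composing the universal property of $\ell_1(A)$ as the free Banach space over $A$ with Theorem~\ref{t:fblb}, with uniqueness supplied by (ii). The paper dismisses (i) as ``elementary''; your seminorm computation reducing the supremum over $B_{\ell_1(A)}$ to the vectors $e_a$ is exactly the verification it leaves implicit.
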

\begin{proof}
(i) is elementary, while (ii) is an immediate consequence of~Lemma~\ref{l:isometry}
and the fact that $\ell_1(A)=\overline{{\rm span}}(\{e_a:a\in A\})$.

In order to check (iii), fix a Banach lattice $Y$ and a bounded map $\kappa:A\rightarrow Y$.
Consider the operator $T:\ell_1(A)\rightarrow Y$ satisfying $T(e_a)=\kappa(a)$ for all $a\in A$.
By Theorem~\ref{t:fblb}, there is a lattice homomorphism $\hat T: FBL[\ell_1(A)]\rightarrow Y$ such that $\hat T \circ \phi_{\ell_1(A)}=T$
(hence $\hat T \circ i=\kappa$), with 
$$
	\|\hat T\|=\|T\|=\sup\{\|\kappa(a)\|: \, a\in A\}.
$$
Moreover, by~(ii), any lattice homomorphism $S: FBL[\ell_1(A)]\rightarrow Y$ such that
$S \circ i=\kappa$ must coincide with~$\hat T$. 
\end{proof}

The following examples show that $H[\ell_1] \supsetneq H_0[\ell_1] \supsetneq FBL[\ell_1]$.

\begin{example}
Let $f\in H[\ell_1]$ be defined by $f(x) := \sup\big\{\frac{|x(a)|}{a} : a\in \mathbb{N}\big\}$ for all $x\in \ell_\infty$.
Then $\|f\|_{FBL[\ell_1]} = \infty$. Indeed, let $(e^*_n)_{n\in \nat}$ be the unit vector basis of~$c_0$. For each $n\in \Nat$
we have $\sup_{a\in \nat}\sum_{k=1}^n|e^*_k(a)| \leq 1$, so Corollary~\ref{c:FBL(A)}(i) yields
$$
	\|f\|_{FBL[\ell_1]}\geq \sum_{k=1}^n f(e^*_k)= \sum_{k=1}^n \frac{1}{k}.
$$
By taking limits when $n\to \infty$ we get $\|f\|_{FBL[\ell_1]} = \infty$. \qed
\end{example}

\begin{example}(A function in $H_0[\ell_1]\setminus FBL[\ell_1]$.)\label{s:examples}
Define a positively homogeneous
function $f:\ell_\infty \to \erre$ by
$$
	f(x) := \min\left(|x(1)|, \, \sup\left\{\frac{|x(a)|}{a} :\, a\geq 2\right\}\right)
	\quad\mbox{for all }x\in \ell_\infty.
$$
The fact that $0 \leq f(x) \leq |x(1)|$ for all $x\in \ell_\infty$ implies that $\|f\|_{FBL[\ell_1]} \leq 1$. 
We will prove that $f\not\in FBL[\ell_1]$ by showing that $\|f-g\|_{FBL[\ell_1]} \geq \frac{1}{4}$
for every $g\in H_0[\ell_1]$ which belongs to the sublattice generated by~$\{\delta_{e_a}:a\in \nat\}$
(bear in mind Corollary~\ref{c:FBL(A)}(ii)).
To this end, note first that such $g$ belongs to the sublattice generated by $\{\delta_{e_1},\dots,\delta_{e_n}\}$
for some~$n\in \nat$. For each $k\in \{1,\dots,n\}$, we define $x^*_k,y^*_k\in \ell_\infty$ by declaring
$x^*_k(1) = y^*_k(1) := \frac{1}{n}$, $x^*_k(n+k) := 1$, and all other coordinates of $x^*_k$ and $y^*_k$ are zero. 
Clearly, we have $\sum_{k=1}^n |x^*_k(a)| \leq 1$ and $\sum_{k=1}^n |y^*_k(a)| \leq 1$ for all $a\in \mathbb{N}$,
so Corollary~\ref{c:FBL(A)}(i) yields
$$
	\|f-g\|_{FBL[\ell_1]}\geq \max\left(\sum_{k=1}^n |f(x_k^*)-g(x_k^*)|, \,\sum_{k=1}^n |f(y_k^*)-g(y_k^*)|\right).
$$
But $g(x^*_k) = g(y^*_k)$ for every $k\in \{1,\dots,n\}$, because 
$g$ belongs to the sublattice generated by $\{\delta_{e_1},\dots,\delta_{e_n}\}$ and $x_k^*(a)=y_k^*(a)$ for every $a\in \{1,\dots,n\}$. Hence
$$
	\|f-g\|_{FBL[\ell_1]}\geq \frac{1}{2} \sum_{k=1}^n |f(x_k^*)-f(y_k^*)|.
$$
By the definition of~$f$, we have $f(x^*_k)=\frac{1}{n+k}$ and $f(y^*_k) = 0$ for every $k\in\{1,\dots,n\}$, so
$$
	\|f-g\|_{FBL[\ell_1]}\geq \frac{1}{2} \sum_{k=1}^n \frac{1}{n+k}\geq 
	\frac{1}{4},
$$
as we wanted to show. \qed
\end{example}

Let $\mathfrak P(B_{E^{**}})$ denote the set of regular Borel
probabilities on $(B_{E^{**}},w^*)$. Then $\mathfrak P(B_{E^{**}})$ is a convex $w^*$-compact
subset of the dual of $C(B_{E^{**}},w^*)$. Note that each $\mu\in \mathfrak P(B_{E^{**}})$ induces a function $f_\mu:E^*\to \erre_+$ by 
$$
	f_\mu(x^*):=\int_{B_{E^{**}}} |x^*(\cdot)| \, d\mu
	\quad
	\mbox{for all }x^*\in E^*.
$$
This provides a link between $H_0[E]_+$ and $\mathfrak P(B_{E^{**}})$, as we next explain.

\begin{prop}\label{prop:fmu}
If $\mu\in \mathfrak P(B_{E^{**}})$, then
$f_\mu\in H_0[E]_+$ and $\|f_\mu\|_{FBL[E]}\leq 1$.
\end{prop}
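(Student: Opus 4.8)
The plan is to dispatch the structural claims about $f_\mu$ quickly and then reduce the norm bound $\|f_\mu\|_{FBL[E]}\leq 1$ to a single pointwise inequality on $B_{E^{**}}$, which I will establish via Goldstine's theorem. First I would check that $f_\mu$ is well defined and lies in $H[E]_+$. For each fixed $x^*\in E^*$ the map $x^{**}\mapsto |x^{**}(x^*)|$ is the composition of the $w^*$-continuous evaluation $x^{**}\mapsto x^{**}(x^*)$ with $|\cdot|$, hence it is continuous and bounded on the $w^*$-compact set $B_{E^{**}}$; in particular it is Borel measurable and $\mu$-integrable, so $f_\mu(x^*)$ is a finite nonnegative real number. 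Positive homogeneity is immediate from $|(\lambda x^*)(\cdot)|=\lambda\,|x^*(\cdot)|$ for $\lambda\geq 0$, so $f_\mu\in H[E]_+$, and once the norm estimate below is in hand we will conclude $f_\mu\in H_0[E]_+$.

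To estimate the norm, I would fix an arbitrary admissible family, i.e. $x_1^*,\dots,x_n^*\in E^*$ with $\sup_{x\in B_E}\sum_{k=1}^n|x_k^*(x)|\leq 1$. Since each $f_\mu(x_k^*)\geq 0$, linearity of the integral over a finite sum gives
$$
	\sum_{k=1}^n |f_\mu(x_k^*)| = \int_{B_{E^{**}}} \Big(\sum_{k=1}^n |x^{**}(x_k^*)|\Big)\,d\mu(x^{**}).
$$
As $\mu$ is a probability, it then suffices to bound the integrand by $1$, that is, to show $\sum_{k=1}^n |x^{**}(x_k^*)|\leq 1$ for every $x^{**}\in B_{E^{**}}$. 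Taking the supremum over all admissible families $(x_k^*)$ will yield $\|f_\mu\|_{FBL[E]}\leq 1$, completing the estimate and hence the proposition.

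The heart of the matter, and the only genuinely nontrivial step, is this pointwise inequality: the hypothesis controls $\sum_k|x_k^*(\cdot)|$ only on $B_E$, yet the bound is needed on the larger set $B_{E^{**}}$. The tool is Goldstine's theorem. Writing $J\colon E\to E^{**}$ for the canonical embedding, I would fix $x^{**}\in B_{E^{**}}$ and choose a net $(x_\alpha)$ in $B_E$ with $J(x_\alpha)\to x^{**}$ in the topology $\sigma(E^{**},E^*)$. Then $x_k^*(x_\alpha)=J(x_\alpha)(x_k^*)\to x^{**}(x_k^*)$ for each $k$, so by continuity of the $\ell_1$-norm on $\erre^n$,
$$
	\sum_{k=1}^n |x^{**}(x_k^*)| = \lim_\alpha \sum_{k=1}^n |x_k^*(x_\alpha)| \leq 1,
$$
where the final inequality holds because $x_\alpha\in B_E$ for every $\alpha$. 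This is exactly the pointwise bound required above. I expect everything else to be routine; the passage from $B_E$ to $B_{E^{**}}$ is the one place where a real idea (density in the bidual) enters.
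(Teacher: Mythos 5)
Your argument is correct and follows essentially the same route as the paper: both reduce the norm bound to the pointwise estimate $\sum_k|x^{**}(x_k^*)|\leq 1$ on $B_{E^{**}}$ and obtain it from the $w^*$-density of $B_E$ in $B_{E^{**}}$ (Goldstine). Your write-up merely spells out the net argument and the measurability of the integrand in more detail than the paper does.
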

\begin{proof} Clearly, $f_\mu$ is positively homogeneous. 
Given $x_1^*,\ldots, x_n^*\in E^*$ we have
$$
	\sum_{i=1}^n f_\mu (x^*_i)=  \int_{B_{E^{**}}} \sum_{i=1}^n |x_i^*(\cdot)| \, d\mu
	\leq \sup_{x^{**}\in B_{E^{**}}}\sum_{i=1}^n|x_i^*(x^{**})|= \sup_{x\in B_{E}}\sum_{i=1}^n|x_i^*(x)|,
$$
where the last equality holds because $B_E$ is $w^*$-dense in~$B_{E^{**}}$.
It follows that $\|f_\mu\|_{FBL[E]} \leq 1$.
\end{proof}

\begin{prop}\label{l:kyfan}
For every $f\in H_0[E]_+$ there is $\mu \in \mathfrak P(B_{E^{**}})$ such that 
$$
	f(x^*) \leq \|f\|_{FBL[E]} \, f_\mu(x^*)
	\quad \mbox{for all }x^*\in E^*.
$$
\end{prop}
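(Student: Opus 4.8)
The plan is to recast the desired domination as the assertion that a certain $w^*$-compact convex family of probabilities is nonempty, and to produce its members through a finite-dimensional minimax argument in which the supremum defining $\|\cdot\|_{FBL[E]}$ reappears verbatim. We may assume $\|f\|_{FBL[E]}=1$ (the case $f=0$ is trivial and the general case follows by scaling), so the goal becomes: find $\mu\in\mathfrak P(B_{E^{**}})$ with $f(x^*)\le f_\mu(x^*)$ for every $x^*\in E^*$.

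First I would fix a finite set $\{x_1^*,\dots,x_n^*\}\subseteq E^*$ and seek a single $\mu$ satisfying these finitely many inequalities simultaneously. On the product $\mathfrak P(B_{E^{**}})\times\Delta_n$, where $\Delta_n=\{\lambda\in\erre^n:\lambda_k\ge0,\ \sum_k\lambda_k=1\}$, consider $\Phi(\mu,\lambda):=\sum_{k=1}^n\lambda_k\big(f_\mu(x_k^*)-f(x_k^*)\big)$. This function is $w^*$-continuous and affine in $\mu$ and continuous and affine in $\lambda$, and both factors are compact and convex ($\mathfrak P(B_{E^{**}})$ is $w^*$-compact), so a minimax theorem for convex--concave functions on a compact convex set (e.g.\ Ky Fan's or Sion's theorem) applies and gives $\max_\mu\min_\lambda\Phi=\min_\lambda\max_\mu\Phi$, with both extrema attained.

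The heart of the matter is to show this common value is $\ge0$, and for this I would evaluate the right-hand side. For fixed $\lambda$, the supremum over $\mu$ of $\sum_k\lambda_k f_\mu(x_k^*)=\int_{B_{E^{**}}}\sum_k\lambda_k|x_k^*|\,d\mu$ is attained at a Dirac measure, hence equals $\sup_{x^{**}\in B_{E^{**}}}\sum_k\lambda_k|x_k^*(x^{**})|=\sup_{x\in B_E}\sum_k\lambda_k|x_k^*(x)|$, the last equality by $w^*$-density of $B_E$ in $B_{E^{**}}$ exactly as in Proposition~\ref{prop:fmu}. Writing $z_k^*:=\lambda_k x_k^*$ and using positive homogeneity of $f$, the inequality $\max_\mu\Phi(\mu,\lambda)\ge0$ turns into $\sum_k f(z_k^*)\le\sup_{x\in B_E}\sum_k|z_k^*(x)|$; dividing the $z_k^*$ by their common ``test'' supremum and using $f\ge0$ to drop absolute values, this is precisely what the definition of $\|f\|_{FBL[E]}=1$ guarantees. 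Thus $\min_\lambda\max_\mu\Phi\ge0$, so some $\mu_F$ attains $\min_\lambda\Phi(\mu_F,\lambda)\ge0$; since a linear functional on $\Delta_n$ attains its minimum at a vertex, this reads $f_{\mu_F}(x_k^*)\ge f(x_k^*)$ for every $k$.

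Finally I would pass from finite subsets to all of $E^*$ by compactness. For each finite $F\subseteq E^*$ the set $M_F:=\{\mu\in\mathfrak P(B_{E^{**}}):f_\mu(x^*)\ge f(x^*)\ \text{for all }x^*\in F\}$ is $w^*$-closed (each $\mu\mapsto f_\mu(x^*)$ is $w^*$-continuous) and nonempty by the previous step, and the family $\{M_F\}$ is closed under finite intersections since $M_{F_1}\cap\cdots\cap M_{F_m}\supseteq M_{F_1\cup\cdots\cup F_m}$. By $w^*$-compactness of $\mathfrak P(B_{E^{**}})$ the intersection of all $M_F$ is nonempty, and any $\mu$ in it satisfies $f(x^*)\le f_\mu(x^*)$ for every $x^*\in E^*$. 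The main obstacle is the middle step: a direct global minimax on $\mathfrak P(B_{E^{**}})\times E^*$ fails because $x^*\mapsto f_\mu(x^*)-f(x^*)$ need not be convex ($f$ is only positively homogeneous, not concave); the finite reduction together with the substitution $z_k^*=\lambda_k x_k^*$ is exactly what converts the problem into the convex quantity controlled by the free Banach lattice norm.
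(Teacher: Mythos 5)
Your proof is correct and follows essentially the same route as the paper: the paper applies Ky Fan's lemma \cite[9.10]{DJT} directly to the convex cone of $w^*$-continuous convex functions $\mu\mapsto\sum_i\bigl(f(x_i^*)-f_\mu(x_i^*)\bigr)$, whose non-positive infima are established by exactly your Dirac-measure computation together with the $w^*$-density of $B_E$ in $B_{E^{**}}$. Your two-step argument (minimax over $\mathfrak P(B_{E^{**}})\times\Delta_n$ for each finite family, then the finite intersection property on the $w^*$-compact sets $M_F$) is just the standard proof of that lemma unrolled, and your substitution $z_k^*=\lambda_k x_k^*$ plays the same role as the paper's observation that positive homogeneity of $f$ makes the family of test functions a convex cone.
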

\begin{proof}
Assume without loss of generality that $\|f\|_{FBL[E]}=1$. Given any finite collection $x_1^*,\ldots, x_n^*\in E^*$, let
$$
\begin{array}{cccl}
	\varphi_{x_1^*,\ldots, x_n^*}:&\mathfrak P(B_{E^{**}})&\longrightarrow&\mathbb R\\
	&\mu&\longmapsto& \displaystyle \sum_{i=1}^n \Big(f(x_i^*)-\int_{B_{E^{**}}}|x_i^*(\cdot)| \, d\mu\Big).
\end{array}
$$
It is clear that the function $\varphi_{x_1^*,\ldots, x_n^*}$ is convex and $w^*$-continuous. Moreover,
since $\|f\|_{FBL[E]}=1$ and $B_E$ is $w^*$-dense in~$B_{E^{**}}$, we have
$$
	\sum_{i=1}^n f(x_i^*)\leq \sup_{x\in B_E}\sum_{i=1}^n|x_i^*(x)|=
	\sup_{x^{**}\in B_{E^{**}}} \sum_{i=1}^n|x_i^*(x^{**})|.
$$
The last supremum is attained at some $x^{**}_0\in B_{E^{**}}$ because
$(B_{E^{**}},w^*)$ is compact and
the map $\sum_{i=1}^n|x_i^*(\cdot)|$ is $w^*$-continuous.
This means that the value of $\varphi_{x_1^*,\ldots, x_n^*}$ at the probability measure concentrated on~$x^{**}_0$ is less
than or igual to~$0$.

By using that $f$ is positively homogeneous, it is easy to check that the collection of all functions of the form $\varphi_{x_1^*,\ldots, x_n^*}$ 
is a convex cone of~$\erre^{\mathfrak P(B_{E^{**}})}$.
From Ky Fan's lemma (see e.g. \cite[9.10]{DJT}) it follows that there is $\mu\in \mathfrak P(B_{E^{**}})$ such that 
$\varphi_{x_1^*,\ldots, x_n^*}(\mu)\leq0$ for every $n\in \Nat$ and $x_1^*,\ldots, x_n^*\in  E^*$. In particular, this yields that for every $x^*\in E^*$ 
we have $\varphi_{x^*}(\mu)\leq 0$, that is,
$$
	f(x^*)\leq \int_{B_{E^{**}}}|x^*(\cdot)|\, d\mu=f_\mu(x^*).
$$
The proof is complete.
\end{proof}

\section{Density character of order intervals}\label{s:OrderIntervals}

Recall that the {\em density character} of a topological space~$T$, denoted by~${\rm dens}(T)$, is the 
least cardinality of a dense subset. Given any Banach space~$E$, we have 
$$
	{\rm dens}(E)={\rm dens}(FBL[E])
$$
since the sublattice generated by~$\phi_E(E)=\{\delta_x:x\in E\}$ is dense in $FBL[E]$.
In \cite[Question 12.5]{dePW}, the authors ask 
whether every non-degenerate order interval in $FBL(A)$ 
(for an arbitrary non-empty set~$A$) has the same density character. Theorem~\ref{t:orderintervals} below shows that this is indeed the case, even in the more general setting of $FBL[E]$.
The proof requires the following lemma, which might be known.

\begin{lem}\label{l:separation}
Let $E$ be an infinite-dimensional Banach space with ${\rm dens}(E)=\kappa$.
Then there exist $\delta>0$ and a linearly independent set $S \sub S_E$ with $|S|=\kappa$ 
such that $\|x-\lambda x'\|\geq\delta$ for every distinct $x,x'\in S$
and every $\lambda\in\mathbb R$.
\end{lem}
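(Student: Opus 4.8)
The plan is to produce the set $S$ by a transfinite/maximality argument that simultaneously guarantees large cardinality and uniform ``projective separation.'' First I would fix a dense subset $D \subseteq S_E$ with $|D| = \kappa$; since $E$ is infinite-dimensional and $\mathrm{dens}(E) = \kappa$, such a $D$ exists and no finite-dimensional subspace can be dense, which is the feature that keeps the construction from terminating early. The quantity to control is the distance from a point $x \in S_E$ to the one-dimensional subspace $\mathbb{R}x'$ spanned by another point; writing $d(x, \mathbb{R}x') = \inf_{\lambda \in \mathbb{R}} \|x - \lambda x'\|$, the goal is a set $S$ on which this distance stays bounded below by a fixed $\delta > 0$ for all distinct pairs.

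The key steps I would carry out are as follows. I would first observe that the projective (quotient) metric $\rho(x,x') := \min\bigl(d(x,\mathbb{R}x'),\, d(x',\mathbb{R}x)\bigr)$ behaves like a genuine separation gauge on $S_E$ modulo the antipodal identification $x \sim -x$, i.e. on the unit sphere of the projective space. Then I would show that for a suitable fixed $\delta > 0$ the space $(S_E/\!\sim, \rho)$ still has density character $\kappa$: this is because collapsing each line to a point and each $\rho$-ball of radius $\delta$ covers at most a controlled ``angular'' neighborhood, so a $\rho$-dense set must have size at least $\kappa$ (otherwise one could build a norm-dense set in $E$ of smaller cardinality, contradicting $\mathrm{dens}(E) = \kappa$). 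Having established that, a maximal $\delta$-separated set in this metric (obtained by Zorn's lemma) is automatically $\rho$-dense, hence has cardinality exactly $\kappa$; choosing one representative in $S_E$ from each class yields a set $S$ with $|S| = \kappa$ and $d(x, \mathbb{R}x') \geq \delta$ for distinct $x, x' \in S$, which is precisely the asserted inequality $\|x - \lambda x'\| \geq \delta$ for all $\lambda \in \mathbb{R}$.

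Finally I would arrange linear independence. The uniform projective separation already forces distinct elements to be pairwise non-proportional, but to get a genuinely linearly independent family I would either thin $S$ by a second transfinite recursion of length $\kappa$ — at each stage discarding any candidate that falls in the (norm-closed) span of the previously chosen countably-or-fewer elements, which is a proper closed subspace and therefore cannot swallow a $\delta$-separated net of full size — or, more cleanly, run the separation and independence requirements together in a single recursion. At stage $\alpha < \kappa$ I would pick $x_\alpha \in S_E$ avoiding both $\overline{\mathrm{span}}(\{x_\beta : \beta < \alpha\})$ and the $\delta$-neighborhoods of the lines $\mathbb{R}x_\beta$; the union of these obstructions is, by an argument on density characters, not all of $S_E$ when $|\alpha| < \kappa$, so the recursion runs for the full $\kappa$ steps.

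The hard part will be the cardinality lower bound, namely verifying that a maximal $\delta$-separated (equivalently, any $\rho$-dense) set must have size $\kappa$ rather than something smaller. The subtlety is that passing to the projective metric $\rho$ could in principle collapse the density character, so I must argue carefully that a $\rho$-dense family of cardinality $< \kappa$ would, after adjoining the scalar multiples from a countable dense subset of $\mathbb{R}$, produce a norm-dense subset of $S_E$ of cardinality $< \kappa$, contradicting $\mathrm{dens}(E) = \kappa$. Pinning down the exact relationship between $\delta$, the $\rho$-covering, and norm-density — and confirming it is robust for at least one concrete value of $\delta$ (e.g. $\delta = 1/2$) — is where the real work lies.
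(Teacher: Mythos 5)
Your overall strategy (a maximal separated family for a projective gauge, followed by a thinning to achieve linear independence) can be made to work, but the step you yourself flag as ``where the real work lies'' is precisely where the proposal fails as written, and the specific claim you make there is false. If $F\subseteq S_E$ is $\rho$-dense with radius $\delta$, adjoining rational scalar multiples of the elements of $F$ does \emph{not} produce a norm-dense subset of $S_E$: it only produces an $r$-net of $S_E$ for some $r$ comparable to $\delta$ (roughly $2\delta$, since $\rho$ is the minimum of the two one-sided distances, so the witness may be $d(x',\mathbb{R}x)<\delta$ rather than $d(x,\mathbb{R}x')<\delta$; for unit vectors these differ by at most a factor of $2$). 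To contradict ${\rm dens}(E)=\kappa$ you therefore still need the standard but non-trivial fact that for any fixed $0<r<1$ an $r$-net of $B_E$ of cardinality $\tau$ forces ${\rm dens}(E)\le\tau\cdot\aleph_0$ (iterate $B_E\subseteq N+rB_E$ and sum the geometric series). With $\delta$ small enough this closes the gap, but it is a missing lemma, not a detail. A second, smaller error: in the thinning step, the justification that a ``proper closed subspace cannot swallow a $\delta$-separated net of full size'' is wrong (a closed hyperplane certainly can contain one); what you need is that the closed span of fewer than $\kappa$ vectors has density character $<\kappa$ and hence meets any norm-$\delta$-separated family in fewer than $\kappa$ points. (Also, at stage $\alpha$ the previously chosen elements number $|\alpha|$, not ``countably-or-fewer''.)

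For comparison, the paper sidesteps both the projective metric and the net-versus-density lemma by first restricting to a cap $S_0=\{x\in S_E:\ x^*(x)>\epsilon\}$ for a fixed $x^*\in S_{E^*}$. The point of the cap is that for $x,x'\in S_0$ the dangerous value $\lambda\approx-1$ is excluded automatically, since $\|x+x'\|\ge x^*(x+x')>2\epsilon$, and for the remaining ranges of $\lambda$ an ordinary norm separation $\|x-x'\|\ge2\delta$ already yields $\|x-\lambda x'\|\ge\delta$ by elementary estimates. A plain $2\delta$-separated subset of the cap of cardinality $\kappa$ exists because the cap generates an open cone and hence has density character $\kappa$, and linear independence follows by passing to a maximal linearly independent subset, whose span must contain the whole separated family. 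Your route, once the net lemma is supplied, is a legitimate alternative, but it is strictly more work than the cap trick.
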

\begin{proof}
Fix $x^*\in S_{E^*}$ and $0<\epsilon<1$. Define $S_0:=\{x\in S_E: x^*(x) >\epsilon\}$.
We will first check that $S_0$ is infinite. To this end, note that
$$
	W := \{x \in E :\, \|x\|<1,\ x^*(x)>\epsilon\}
$$ 
is open and non-empty, hence 
there exist $x_0\in W$ and $r>0$ in such a way that $x_0+rB_E \sub W$.
Since $T := \{x \in x_0+rB_E : x^*(x)=x^*(x_0)\}$ is infinite (because it is a closed ball of the affine hyperplane $x_0 + \ker(x^*)$) and 
the map $h: T \to S_0$ given by $h(x) := \| x \|^{-1} \, x$ is one-to-one, we conclude that $S_0$ is infinite.

We next show that ${\rm dens}(S_0)=\kappa$. Indeed, take any dense set $D \sub S_0$.
Then $D$ is infinite and so it has the same cardinality as $\tilde{D}:=\{\lambda x: x\in D, \lambda\in \mathbb{Q}, \lambda>0\}$, which
is dense in
$$
	G:=\{\lambda x: \, x\in S_0, \, \lambda\in \mathbb{R}, \, \lambda>0\}=\{x\in E: \, x^*(x)>\epsilon\|x\|\}.
$$
Since $G$ is open and non-empty, we have $\kappa={\rm dens}(G)\leq |\tilde{D}|=|D|$.
This proves that ${\rm dens}(S_0)=\kappa$.

So, there exist $0<\delta<\epsilon$ and a set $S_1 \sub S_0$ with $|S_1|=\kappa$ such that
$\|x-x'\|\geq 2\delta$ for every distinct $x,x'\in S_1$. Let $S \sub S_1$ be a maximal linearly independent subset.
We shall check that $S$ satisfies the required properties. By maximality, we have $S_1 \sub {\rm span}(S)$
and therefore
\begin{enumerate}
\item[(i)] $S$ is infinite (bear in mind that the closed unit ball of a finite-dimensional Banach space is compact);
\item[(ii)] ${\rm dens}({\rm span}(S))=\kappa$. 
\end{enumerate}
From (i) and (ii) it follows that $|S|=\kappa$. Now, fix $x,x'\in S$ with $x\neq x'$
and take any $\lambda \in \erre$. We next show that $\|x-\lambda x'\| \geq \delta$
by considering several cases:
\begin{itemize}
\item If $|1+\lambda| < \delta$, then $\|(x-\lambda x')-(x+x')\|<\delta$
and so 
$$
	\|x-\lambda x'\| > \|x+x'\|-\delta\geq x^*(x+x')-\delta>2\epsilon-\delta>\delta. 
$$
\item If $|1-\lambda| < \delta$, then
$$
	\|x-\lambda x'\| \geq \|x-x'\| - \|x'-\lambda x'\|>\|x-x'\|-\delta \geq \delta.
$$
\item If $|\lambda|\leq 1-\delta$, then
$$
	\|x-\lambda x'\| \geq \|x\|-|\lambda|\|x'\|=1-|\lambda| \geq \delta.
$$
\item If $|\lambda|\geq 1+\delta$, then 
$$
	\|x-\lambda x'\| \geq |\lambda|\|x'\|-\|x\|=|\lambda|-1 \geq \delta.
$$
\end{itemize} 
The proof is finished.
\end{proof}

\begin{thm}\label{t:orderintervals}
Let $E$ be a Banach space. Then every non-degenerate order interval in $FBL[E]$ has the same density character as~$E$.
\end{thm}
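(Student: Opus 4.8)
The plan is to prove the two inequalities $\mathrm{dens}(I)\le\mathrm{dens}(E)$ and $\mathrm{dens}(I)\ge\mathrm{dens}(E)$ for a non-degenerate order interval $I=[f,g]$. Since translation by $f$ is an isometry of $FBL[E]$ carrying $[0,g-f]$ onto $[f,g]$, I may assume $I=[0,w]$ with $w:=g-f>0$. The order on $FBL[E]$ is the pointwise order inherited from $H_0[E]$, so $h\in[0,w]$ precisely when $h\in FBL[E]$ and $0\le h(x^*)\le w(x^*)$ for all $x^*\in E^*$. The upper bound is immediate: $[0,w]$ is a subset of the metric space $FBL[E]$, and density character is monotone for subspaces of metric spaces, so $\mathrm{dens}([0,w])\le\mathrm{dens}(FBL[E])=\mathrm{dens}(E)$.

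For the lower bound the idea is to exhibit, inside $[0,w]$, a $\rho$-separated family of cardinality $\kappa:=\mathrm{dens}(E)$; the disjointness of the balls of radius $\rho/2$ around its members then forces any dense subset of $[0,w]$ to have cardinality $\ge\kappa$. Assume first that $E$ is infinite-dimensional. Since $w\ge 0$ and $w\neq 0$, there is $z^*\in E^*$ with $w(z^*)>0$; put $\beta:=\tfrac12 w(z^*)>0$. A key preliminary observation is that every element of $FBL[E]$ is norm-continuous on $E^*$: each $\delta_x$ is Lipschitz, finite lattice operations preserve continuity, and by Remark~\ref{rem:PointwiseBounded} convergence in $\|\cdot\|_{FBL[E]}$ forces uniform convergence on bounded subsets of $E^*$. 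Hence $w$ is continuous, and there is $s_0>0$ with $w(y^*)\ge\beta$ for every $y^*\in z^*+s_0 B_{E^*}$; this uniform lower bound on a whole ball is what will let me use a single scale $s_0$ for all pairs.

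The separated family is built from directions lying in $\ker z^*$. Since $z^*\neq 0$ and $E$ is infinite-dimensional, $\ker z^*$ is an infinite-dimensional closed hyperplane with $\mathrm{dens}(\ker z^*)=\kappa$, so Lemma~\ref{l:separation} supplies $\delta>0$ and a linearly independent $S\sub S_{\ker z^*}$ with $|S|=\kappa$ and $\|x-\lambda x'\|\ge\delta$ for distinct $x,x'\in S$ and all $\lambda\in\erre$. Set $t:=\beta/(s_0\delta)$ and, for $x\in S$, define
\[
  h_x:=w\wedge\big(t\,|\delta_x|\big)\in[0,w].
\]
To separate $h_x$ from $h_{x'}$ (for $x\neq x'$), use Hahn--Banach to choose $\xi^*\in E^*$ with $\|\xi^*\|=1$, $\xi^*(x')=0$ and $\xi^*(x)=\mathrm{dist}(x,{\rm span}(x'))\ge\delta$, and probe with $y^*:=z^*+s_0\xi^*$. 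Because $x,x'\in\ker z^*$, one computes $y^*(x')=0$ and $|y^*(x)|=s_0\xi^*(x)\ge s_0\delta$, whence, using $w(y^*)\ge\beta$ and the choice of $t$,
\[
  h_{x'}(y^*)=\min\big(w(y^*),\,t|y^*(x')|\big)=0,\qquad h_x(y^*)=\min\big(w(y^*),\,t|y^*(x)|\big)\ge\beta.
\]
Since $\|y^*\|\le M:=\|z^*\|+s_0$, positive homogeneity of $h_x-h_{x'}$ together with Remark~\ref{rem:PointwiseBounded} gives $\|h_x-h_{x'}\|_{FBL[E]}\ge\|h_x-h_{x'}\|_\infty\ge\beta/M=:\rho>0$, a constant independent of the pair. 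Thus $\{h_x:x\in S\}$ is the desired $\rho$-separated family of size $\kappa$, and $\mathrm{dens}([0,w])\ge\kappa$.

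Finally, if $E$ is finite-dimensional (and nonzero) then $\mathrm{dens}(E)=\aleph_0$, while $[0,w]$ contains the infinitely many distinct points $\{sw:0\le s\le 1\}$ and hence, being an infinite metric space, has density character $\ge\aleph_0$; together with the upper bound this gives equality. The main obstacle throughout is the lower bound: the separating functionals naturally produced for the set $S$ need not see $w$ at all (one could have $w(y^*)=0$ there), and the device that overcomes this is to force $S$ into $\ker z^*$ and to perturb the single functional $z^*$---where $w$ is controlled from below on an entire ball---along the separating directions, so that a uniform scale $s_0$ and a uniform separation constant $\rho$ work simultaneously for all pairs.
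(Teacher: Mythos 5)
Your proof is correct, but it takes a genuinely different route from the paper's. Both arguments hinge on Lemma~\ref{l:separation} to produce a $\delta$-separated, linearly independent family of size $\kappa={\rm dens}(E)$ and then plant a uniformly separated family of that size inside the interval; the differences lie in where the lemma is applied and in how separation is certified. The paper keeps $[f,g]$ as is: it picks a separable subspace $F$ whose generated sublattice contains $f$ and $g$, applies Lemma~\ref{l:separation} to the quotient $E/F$, lifts to vectors $u_s\in E$, forms $z_s=(\delta_{u_s}\vee f)\wedge g$, and separates $z_s$ from $z_t$ by Hahn--Banach-extending a functional from ${\rm span}(F\cup\{u_s,u_t\})$ and then promoting it to a lattice homomorphism on $FBL[E]$ via Theorem~\ref{t:fblb}, using that this homomorphism agrees with an evaluation functional on the sublattice containing $f$ and $g$. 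You instead translate to $[0,w]$, apply Lemma~\ref{l:separation} to the hyperplane $\ker z^*$ where $w(z^*)>0$, form $h_x=w\wedge t|\delta_x|$, and separate by evaluating at the explicit perturbed functionals $y^*=z^*+s_0\xi^*$, using norm-continuity of $w$ on $E^*$ (which you justify correctly; it is the norm analogue of Lemma~\ref{lem:continuity2}) to keep $w\ge\beta$ on the whole ball $z^*+s_0B_{E^*}$. The computations check out: $y^*(x')=0$ and $|y^*(x)|\ge s_0\delta$ give $h_{x'}(y^*)=0$ and $h_x(y^*)\ge\beta$, and Remark~\ref{rem:PointwiseBounded} together with positive homogeneity yields the uniform lower bound $\beta/(\|z^*\|+s_0)$ on $\|h_x-h_{x'}\|_{FBL[E]}$. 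What your route buys is economy: it avoids the quotient construction and the extension machinery of Theorem~\ref{t:fblb}, working entirely with the pointwise order and evaluation functionals, and it treats the separable and non-separable cases uniformly. What the paper's route buys is that it never needs to perturb a functional where the interval is ``visible''; by localizing $f$ and $g$ to a separable piece and working transversally to it, a single evaluation functional $\varphi_{x^*}$ witnessing $f(x^*)<g(x^*)$ serves for all pairs, at the cost of invoking the universal property to extend $\xi$ to a lattice homomorphism.
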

\begin{proof}
If $E$ is separable, then so is $FBL[E]$ and, in particular, every order interval in $FBL[E]$ is separable. 

Let us assume now that $E$ is non-separable with ${\rm dens}(E)=\kappa$. 
Pick $f\leq g$ in $FBL[E]$ with $f\neq g$. 
Rescaling, we can suppose that $f$ and $g$ belong to~$B_{FBL[E]}$.
There is a {\em separable} closed subspace $F\sub E$ such that $f$ and $g$
belong to the closed sublattice of~$FBL[E]$ generated by $\{\delta_{x}:x \in F\}$.
Bearing in mind that ${\rm dens}(E/F)=\kappa$, we can apply Lemma~\ref{l:separation} to~$E/F$ in order to find $\delta>0$ and 
a set $S \sub S_{E/F}$ with $|S|=\kappa$, consisting of linearly independent vectors, such that
$\|s-\lambda t\|\geq\delta$ for every distinct $s,t \in S$ and every $\lambda\in\mathbb R$. 

Let $\pi:E\rightarrow E/F$ denote the quotient operator. For each $s\in S$, we take $u_s\in \pi^{-1}(s)$ with $\|u_s\|\leq 2$ and we define 
$$
	z_s := (\delta_{u_s}\vee f)\wedge g\in [f,g] \sub FBL[E].
$$ 
To finish the proof it is enough to prove that there is a constant $c>0$ such that $\|z_s - z_t\| \geq c$ for every $s,t \in S$ with $s\neq t$.
Fix $x^*\in B_{E^*}$ such that $f(x^*)<g(x^*)$ and consider the evaluation functional $\varphi_{x^*}\in B_{FBL[E]^*}$
given by~$\varphi_{x^*}(h):=h(x^*)$ for all $h\in FBL[E]$ (see Corollary~\ref{cor:RealHomomorphisms}).
Set $C:=6\|\pi\|/\delta+1$ and fix $s\neq t$ in~$S$.

{\em Claim.} The inequality 
\begin{equation}\label{eqn:Pedro}
	\|x\|+|\alpha|+|\beta|\leq C\|x+\alpha u_s+\beta u_t\|
\end{equation}
holds for every $x\in F$ and every $\alpha,\beta \in\mathbb R$. Indeed, by the choice of~$S$ we have
$$
	\|\pi\|\, \|x+\alpha u_s+\beta u_t\|\geq \|\alpha s+\beta t\| \geq \delta \, \max\{|\alpha|,|\beta|\} \geq \delta\, \frac{|\alpha|+|\beta|}{2}.
$$
On the other hand, since $\|u_s\|\leq 2$ and $\|u_t\|\leq 2$, we have
$$
	\|x+\alpha u_s+\beta u_t\| \geq \|x\|-\|\alpha u_s+\beta u_t\| \geq \|x\|-2\big(|\alpha|+|\beta|\big).
$$
Therefore
\begin{multline*}
	C \|x+\alpha u_s+\beta u_t\|=\frac{6\|\pi\|}{\delta}  \|x+\alpha u_s+\beta u_t\| + \|x+\alpha u_s+\beta u_t\| \\
	\geq 3\big(|\alpha|+|\beta|\big)+\|x\|-2\big(|\alpha|+|\beta|\big)=\|x\|+|\alpha|+|\beta|,
\end{multline*}
which finishes the proof of the claim.
 
Note that $u_s$ and $u_t$ are linearly independent vectors in~$E\setminus F$. Let 
$$
	\xi: {\rm span}(F\cup\{u_s,u_t\})\rightarrow \mathbb R
$$ 
be the linear functional given by 
\begin{multline*}
	\xi(x+\alpha u_s+\beta u_t):=\varphi_{x^*}(\delta_x+\alpha f+\beta g) \\
	=x^*(x)+\alpha f(x^*)+\beta g(x^*)
	\quad
	\mbox{for all }x\in F \mbox{ and } \alpha,\beta\in \erre.
\end{multline*}
Bearing in mind~\eqref{eqn:Pedro}, we get 
$$
	|\xi(x+\alpha u_s+\beta u_t)|\leq \| \delta_x+\alpha f+\beta g\|_{FBL[E]} \leq  \|x\|+|\alpha|+|\beta|\leq C\|x+\alpha u_s+\beta u_t\|
$$ 
for every $x\in F$ and $\alpha,\beta\in \erre$. By the Hahn-Banach theorem, $\xi$ can be extended to an element of~$E^*$, still denoted by~$\xi$, 
with $\|\xi\|\leq C$. 

Now, let $\hat{\xi}\in FBL[E]^*$ be the lattice homomorphism satisfying $\hat \xi \circ \phi_E=\xi$ and $\|\hat{\xi}\|=\|\xi\|\leq C$
(Theorem~\ref{t:fblb}). Note that 
$$
	\hat{\xi}(\delta_x)=\xi(x)=\varphi_{x^*}(\delta_x)=x^*(x)\quad
	\mbox{for every }x\in F.
$$  
Since $\hat\xi$ and $\varphi_{x^*}$
are lattice homomorphisms and $f$ and~$g$ belong to the closed sublattice generated by $\{\delta_x:x\in F\}$, 
it follows that $\hat{\xi}(f)=\varphi_{x^*}(f)=f(x^*)$ and $\hat{\xi}(g)=\varphi_{x^*}(g)=g(x^*)$. In particular, we have
$$
	C\|z_s - z_t\| \geq\Big|\hat{\xi}\Big((\delta_{u_s}\vee f)\wedge g-(\delta_{u_t}\vee f)\wedge g\Big)\Big|=g(x^*)-f(x^*).
$$
This shows that $\|z_s - z_t\|\geq C^{-1}(g(x^*)-f(x^*))$ for every $s\neq t$ in~$S$. The proof is finished.
\end{proof}

\section{The Nakano property}\label{s:nakano}

The norm of a Banach lattice $X$ is said to have the \emph{Nakano property} if for every upwards directed order bounded set 
$\mathcal{F}\sub X_+$ we have
$$
	\sup\{\|x\|: \, x\in\mathcal F\}=\inf\{\|y\|: \, y \in X \mbox{ is an upper bound of } \mathcal{F}\}.
$$
This property was introduced in \cite{N} (see also \cite{W}) and is stronger than the \emph{Fatou property}, which 
simply states that whenever an upwards directed set $\mathcal{F}\sub X_+$ has a supremum $y \in X$, then
$$
	\sup\{\|x\|:\, x\in\mathcal F\}=\|y\|.
$$

In \cite[Question 12.1]{dePW}, it was asked whether the norm of $FBL(A)$ has the Nakano property. We will show 
in Theorem~\ref{thm:Nakano} that this is the case. In fact, a stronger property holds:

\begin{defn}
We say that the norm of a Banach lattice $X$ has the \emph{strong Nakano property} if for every upwards directed norm bounded 
set $\mathcal{F}\sub X_{+}$ there exists an upper bound $y_0$ of~$\mathcal{F}$ in~$X$ such that 
$$
	\sup\{\|x\|: \, x\in\mathcal F\}=\|y_0\|.
$$	
\end{defn}

The supremum norm of a $C(K)$ space ($K$ being a compact Hausdorff topological space) has the strong Nakano property, 
because we can take $y_0$ as the constant function equal to $\sup\{\|x\|_\infty :x\in\mathcal F\}$. In a sense, we shall see that the free 
Banach lattices $FBL(A)=FBL[\ell_1(A)]$ have an analogous structure, the role of the positive constant functions 
being played by the elements of the form $|\delta_{e_a}|$ for $a\in A$. Our proof of Theorem~\ref{thm:Nakano} requires some preliminary lemmas. 

\begin{lem}\label{lem:PointwiseSupremum}
Let $E$ be a Banach space and let $\mathcal{F}\sub H[E]_+$ be upwards directed and pointwise bounded. Define $g: E^*\to \erre_+$ 
by $g(x^*):=\sup\{f(x^*):f\in \mathcal{F}\}$ for all $x^*\in E^*$. Then $g\in H[E]_+$ and 
$$
	\|g\|_{FBL[E]} = \sup \{\|f\|_{FBL[E]} : \, f\in\mathcal{F}\}.
$$ 
\end{lem}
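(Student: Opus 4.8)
The plan is to separate the statement into the structural claim $g \in H[E]_+$ and the norm identity, proving the latter as two inequalities, and writing $M := \sup\{\|f\|_{FBL[E]} : f \in \mathcal{F}\}$ throughout.

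First I would check that $g \in H[E]_+$. Non-negativity is immediate, since $g$ is a pointwise supremum of the non-negative functions $f \in \mathcal{F}$; finiteness of $g(x^*)$ for each $x^*$ is exactly the pointwise boundedness hypothesis; and positive homogeneity follows by pulling positive scalars through the supremum, $g(\lambda x^*) = \sup_{f \in \mathcal{F}} f(\lambda x^*) = \lambda \sup_{f \in \mathcal{F}} f(x^*) = \lambda g(x^*)$ for $\lambda \geq 0$ (using $f(0)=0$ when $\lambda = 0$).

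The inequality $\|g\|_{FBL[E]} \geq M$ should come for free from monotonicity of the norm. Inspecting the defining formula, whenever $0 \leq f \leq g$ pointwise one has $|f(x_k^*)| = f(x_k^*) \leq g(x_k^*) = |g(x_k^*)|$ for every admissible family $x_1^*,\dots,x_n^*$, so $\|f\|_{FBL[E]} \leq \|g\|_{FBL[E]}$; since $f \leq g$ for every $f \in \mathcal{F}$, taking the supremum over $\mathcal{F}$ gives $\|g\|_{FBL[E]} \geq M$ (this remains valid if $M = \infty$).

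The substantive direction is $\|g\|_{FBL[E]} \leq M$, and the hard part---the one place the directedness hypothesis is genuinely used---is to replace the pointwise supremum defining $g$ by a \emph{single} function of $\mathcal{F}$ that works at finitely many functionals at once. Concretely, I would fix an admissible family $x_1^*,\dots,x_n^* \in E^*$ (that is, with $\sup_{x \in B_E}\sum_{k=1}^n |x_k^*(x)| \leq 1$) and $\epsilon > 0$. For each $k$, the definition of $g$ yields $f_k \in \mathcal{F}$ with $f_k(x_k^*) > g(x_k^*) - \epsilon$; since $\mathcal{F}$ is upwards directed, a finite induction produces one $f \in \mathcal{F}$ dominating all the $f_k$, so that $f(x_k^*) > g(x_k^*) - \epsilon$ holds simultaneously for every $k$. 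Using $f \geq 0$ together with admissibility of the family, I would then estimate
$$
	\sum_{k=1}^n g(x_k^*) < \sum_{k=1}^n f(x_k^*) + n\epsilon \leq \|f\|_{FBL[E]} + n\epsilon \leq M + n\epsilon.
$$
Letting $\epsilon \to 0$ with $n$ fixed gives $\sum_{k=1}^n g(x_k^*) \leq M$, and taking the supremum over all admissible families yields $\|g\|_{FBL[E]} \leq M$, completing the argument. The whole proof is short; without directedness the various $x_k^*$ could force incompatible choices of $f$, and no single norm $\|f\|_{FBL[E]}$ would control the sum.
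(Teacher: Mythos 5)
Your proposal is correct and follows essentially the same route as the paper: the lower bound from monotonicity of the norm, and the upper bound by using directedness to find a single $f\in\mathcal{F}$ approximating $g$ at all of $x_1^*,\dots,x_n^*$ simultaneously (the paper uses $\epsilon/n$ where you use $\epsilon$ and multiply by $n$ later, a cosmetic difference). Nothing is missing.
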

\begin{proof}
Clearly, $g$ is positively homogeneous and $\|g\|_{FBL[E]}  \geq \|f\|_{FBL[E]}$ for every $f\in\mathcal{F}$. 
To prove that $\|g\|_{FBL[E]} \leq \sup \{\|f\|_{FBL[E]} : \, f\in\mathcal{F}\}:=\alpha$ we can assume that
the supremum is finite. Fix $\epsilon>0$. Take any $x_1^*,\dots,x_n^* \in E^*$ such that $\sum_{k=1}^n |x_k^*(x)|\leq 1$
for every $x\in B_E$. Since $\mathcal{F}$ is upwards directed, we can find $f\in \mathcal{F}$ such that
$g(x_k^*)-\frac{\epsilon}{n} \leq f(x_k^*)$ for all $k\in \{1\dots,n\}$, therefore
$$
	\sum_{k=1}^n g(x^*_k) \leq \epsilon+\sum_{k=1}^n f(x^*_k) \leq \epsilon+\alpha.
$$
It follows that $\|g\|_{FBL[E]} \leq \epsilon+\alpha$. As $\epsilon>0$ is arbitrary, $\|g\|_{FBL[E]} \leq \alpha$.
\end{proof}

\begin{defn}
Let $E$ be a Banach space. We say that $f\in H_0[E]_+$ is {\em maximal} if 
$$
	\big\{g\in H_0[E]_+: \, g \geq f, \, \|g\|_{FBL[E]} = \|f\|_{FBL[E]} \big\}=\{f\}.
$$
\end{defn}

\begin{lem}\label{lem:Zorn}
Let $E$ be a Banach space and $f\in H_0[E]_+$. Then there exists a maximal $\tilde{f} \in H_0[E]_+$ such that 
$f\leq \tilde{f}$  and $\|f\|_{FBL[E]} = \|\tilde{f}\|_{FBL[E]}$.
\end{lem}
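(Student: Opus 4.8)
The plan is to apply Zorn's lemma to the partially ordered set
$$
	\mathcal{P}:=\big\{g\in H_0[E]_+: \, g\geq f, \ \|g\|_{FBL[E]}=\|f\|_{FBL[E]}\big\},
$$
equipped with the pointwise order inherited from $H_0[E]$. First I would observe that $\mathcal{P}$ is non-empty, since $f\in\mathcal{P}$, and that a maximal element $\tilde f$ of $(\mathcal{P},\leq)$ is precisely a maximal function in the sense of the preceding definition: indeed, any $g\in H_0[E]_+$ with $g\geq\tilde f$ and $\|g\|_{FBL[E]}=\|\tilde f\|_{FBL[E]}=\|f\|_{FBL[E]}$ automatically lies in $\mathcal{P}$ and dominates $\tilde f$, so maximality within $\mathcal{P}$ forces $g=\tilde f$. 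Thus it suffices to produce a maximal element of $\mathcal{P}$.

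Next I would verify the chain condition. Let $\mathcal{C}\subseteq\mathcal{P}$ be a non-empty chain; being totally ordered, $\mathcal{C}$ is upwards directed. The key point is that $\mathcal{C}$ is pointwise bounded: for each $x^*\in E^*$ and each $h\in\mathcal{C}$, positive homogeneity together with Remark~\ref{rem:PointwiseBounded} give $|h(x^*)|\leq\|x^*\|\,\|h\|_\infty\leq\|x^*\|\,\|h\|_{FBL[E]}=\|x^*\|\,\|f\|_{FBL[E]}$. Hence the pointwise supremum $g(x^*):=\sup\{h(x^*):h\in\mathcal{C}\}$ is finite for every $x^*$, and Lemma~\ref{lem:PointwiseSupremum} applies to yield $g\in H[E]_+$ with $\|g\|_{FBL[E]}=\sup\{\|h\|_{FBL[E]}:h\in\mathcal{C}\}=\|f\|_{FBL[E]}<\infty$. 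Consequently $g\in H_0[E]_+$; moreover $g\geq f$ because every $h\in\mathcal{C}$ satisfies $h\geq f$, so altogether $g\in\mathcal{P}$. Since $g$ dominates every member of $\mathcal{C}$ pointwise, it is an upper bound for $\mathcal{C}$ in $\mathcal{P}$.

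Finally, Zorn's lemma provides a maximal element $\tilde f$ of $\mathcal{P}$, which by the first paragraph is the desired maximal function satisfying $f\leq\tilde f$ and $\|f\|_{FBL[E]}=\|\tilde f\|_{FBL[E]}$. The only genuinely delicate step is ensuring that the pointwise supremum of a chain remains in $H_0[E]$ — that is, that its $\|\cdot\|_{FBL[E]}$-norm does not increase beyond $\|f\|_{FBL[E]}$ — but this is exactly the content of Lemma~\ref{lem:PointwiseSupremum}, so once the pointwise boundedness is recorded the argument is otherwise routine.
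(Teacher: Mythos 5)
Your proposal is correct and follows essentially the same route as the paper: apply Zorn's lemma to the set $\{g\in H_0[E]_+:\, g\geq f,\ \|g\|_{FBL[E]}=\|f\|_{FBL[E]}\}$ under the pointwise order, using Remark~\ref{rem:PointwiseBounded} for pointwise boundedness and Lemma~\ref{lem:PointwiseSupremum} to produce upper bounds of chains within the set. Your explicit check that a Zorn-maximal element of this set is maximal in the sense of the preceding definition is a detail the paper leaves implicit, but the argument is the same.
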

\begin{proof}
The set $\mathcal{G}:=\{g\in H_0[E]_+: \, g\geq f, \, \|g\|_{FBL[E]}=\|f\|_{FBL[E]}\}$ 
is pointwise bounded by Remark~\ref{rem:PointwiseBounded}. Note that every 
upwards directed subset of~$\mathcal{G}$ has an upper bound in~$\mathcal{G}$
(by Lemma~\ref{lem:PointwiseSupremum}). Thus,
Zorn's lemma ensures the existence of an element of~$\mathcal{G}$ which is maximal for the pointwise ordering.
\end{proof}

Our next aim is to identify the maximal elements in $H_0[\ell_1(A)]_+$ for an arbitrary non-empty set~$A$.
We shall use without explicit mention the formula to compute the norm $\|\cdot\|_{FBL[\ell_1(A)]}$ 
given in Corollary~\ref{c:FBL(A)}(i).

\begin{lem}\label{lem:PropertiesMaximal}
Let $A$ be a non-empty set and let $f\in H_0[\ell_1(A)]_+$ be maximal. 
\begin{enumerate}
\item[(i)] $f(x^*)\leq f(y^*)$ whenever $x^*,y^*\in \ell_\infty(A)$ satisfy $|x^*|\leq |y^*|$.
\item[(ii)]  $\sum_{k=1}^n f(x^*_k) \leq f(\sum_{k=1}^n x^*_k)$
for every $n\in\mathbb N$ and $x_1^*,\dots,x_n^*\in \ell_\infty(A)_+$.
\item[(iii)] $\|f\|_{FBL[\ell_1(A)]} = \|f\|_\infty$.
\end{enumerate}
\end{lem}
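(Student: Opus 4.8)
The plan is to derive (i) and (ii) simultaneously from the maximality of $f$, and then to deduce (iii) from them. The core idea is that, starting from $f$, one can build a pointwise \emph{larger} function with the \emph{same} $FBL$-norm; maximality then forces it to coincide with $f$, and the defining inequalities of that function turn into exactly (i) and (ii).

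Concretely, I would define $g:\ell_\infty(A)\to\erre_+$ by
$$
g(y^*):=\sup\Big\{\sum_{k=1}^n f(x_k^*):\ n\in\nat,\ x_1^*,\dots,x_n^*\in\ell_\infty(A),\ \sum_{k=1}^n|x_k^*|\le|y^*|\Big\}.
$$
Taking $n=1$ and $x_1^*=y^*$ shows $g\ge f\ge 0$, and a rescaling argument using positive homogeneity of $f$ (together with the estimate defining the norm) shows $g(y^*)\le\|y^*\|_\infty\,\|f\|_{FBL[\ell_1(A)]}<\infty$, so $g\in H_0[\ell_1(A)]_+$. Positive homogeneity of $g$ is immediate from that of $f$ and the fact that the constraint only involves $|y^*|$.

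The crucial step is to verify $\|g\|_{FBL[\ell_1(A)]}=\|f\|_{FBL[\ell_1(A)]}$. The inequality ``$\ge$'' is clear since $g\ge f\ge 0$. For ``$\le$'', fix $y_1^*,\dots,y_m^*\in\ell_\infty(A)$ with $\sup_{a\in A}\sum_{j=1}^m|y_j^*(a)|\le 1$ and $\epsilon>0$, and for each $j$ choose $x_{j,1}^*,\dots,x_{j,n_j}^*$ with $\sum_i|x_{j,i}^*|\le|y_j^*|$ and $\sum_i f(x_{j,i}^*)\ge g(y_j^*)-\epsilon/m$. The key observation---the part I expect to require the most care---is that the \emph{local} constraints pool into the \emph{global} one: for every $a\in A$,
$$
\sum_{j=1}^m\sum_i|x_{j,i}^*(a)|\le\sum_{j=1}^m|y_j^*(a)|\le 1,
$$
so the whole family $\{x_{j,i}^*\}$ is admissible in the definition of $\|f\|_{FBL[\ell_1(A)]}$. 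Using $f\ge 0$ this gives $\sum_{j}g(y_j^*)-\epsilon\le\sum_{j,i}f(x_{j,i}^*)\le\|f\|_{FBL[\ell_1(A)]}$; letting $\epsilon\to 0$ and taking the supremum over admissible $(y_j^*)$ yields ``$\le$''. By maximality of $f$ we conclude $g=f$. Then (i) follows by reading $g=f$ at $n=1$: if $|x^*|\le|y^*|$ then $f(x^*)\le g(y^*)=f(y^*)$. And (ii) follows by taking $x_1^*,\dots,x_n^*\in\ell_\infty(A)_+$ and $y^*=\sum_k x_k^*$, for which $\sum_k|x_k^*|=|y^*|$, so that $\sum_k f(x_k^*)\le g(y^*)=f\big(\sum_k x_k^*\big)$.

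Finally, (iii) is a short consequence of (i) and (ii). Since $\|f\|_\infty\le\|f\|_{FBL[\ell_1(A)]}$ always holds (Remark~\ref{rem:PointwiseBounded}), it suffices to prove the reverse. Given $x_1^*,\dots,x_n^*$ with $\sup_{a\in A}\sum_k|x_k^*(a)|\le 1$, use $f\ge 0$ and (i) (with $|x_k^*|\le|x_k^*|$) to get $f(x_k^*)\le f(|x_k^*|)$, and then (ii) to get $\sum_k f(|x_k^*|)\le f\big(\sum_k|x_k^*|\big)$. Since $\sum_k|x_k^*|$ is a positive element of $B_{\ell_\infty(A)}$ and $f\ge 0$, the right-hand side is at most $\|f\|_\infty$. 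Taking the supremum over all such families gives $\|f\|_{FBL[\ell_1(A)]}\le\|f\|_\infty$, completing the proof.
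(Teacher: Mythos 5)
Your proof is correct, but it takes a genuinely different route from the paper's. The paper proves (i) and (ii) separately, each by contradiction: assuming the inequality fails, it modifies $f$ only on a single ray $R(y^*)=\{\lambda y^*:\lambda>0\}$ (setting $g(\lambda y^*):=f(\lambda x^*)$ for (i), resp.\ $g(\lambda x^*):=\lambda\sum_k f(x_k^*)$ for (ii)), and verifies in each case that the norm is unchanged by splitting an admissible family into the terms lying on the ray and the rest; maximality then gives the contradiction. You instead build in one stroke the ``solid superadditive envelope'' $g(y^*)=\sup\{\sum_k f(x_k^*):\sum_k|x_k^*|\le|y^*|\}$, prove $\|g\|_{FBL[\ell_1(A)]}=\|f\|_{FBL[\ell_1(A)]}$ directly via the pooling observation that the local constraints $\sum_i|x_{j,i}^*|\le|y_j^*|$ combine into the global admissibility condition, and read off both (i) and (ii) from $g=f$. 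All the steps check out: $g\ge f\ge0$ (take $n=1$, $x_1^*=y^*$), finiteness and positive homogeneity are as you say, and the maximality hypothesis applies since $g\in H_0[\ell_1(A)]_+$. Your argument is more unified (one construction, one norm computation, both conclusions at once) and is direct rather than by contradiction; the paper's version keeps each modification of $f$ as local as possible, which makes each individual norm verification slightly more elementary but requires carrying it out twice. Part (iii) is handled identically in both.
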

\begin{proof} For any $z^*\in \ell_\infty(A)$ we write $R(z^*):=\{\lambda z^*:  \lambda>0\}\sub \ell_\infty(A)$.

(i): By contradiction, suppose that $f(x^*)>f(y^*)$. 
Define $g:\ell_\infty(A)\to \erre_+$ by   
$$
	\begin{cases}
		g(\lambda y^*):=f(\lambda x^*) & \text{for all $\lambda>0$}, \\
		g(z^*):=f(z^*) & \text{for all $z^*\in \ell_\infty(A) \setminus R(y^*)$}.
	\end{cases}
$$
It is easy to check that $g$ is positively homogeneous. Since $f(x^*)>f(y^*)$, we have $f\leq g$ and $f\neq g$.
Bearing in mind that $f$ is maximal, in order to get a contradiction it suffices to check that $\|g\|_{FBL[\ell_1(A)]}=\|f\|_{FBL[\ell_1(A)]}$.
To this end, take any $x_1^*,\dots,x_n^*\in \ell_\infty(A)$ such that $\sum_{k=1}^n |x_k^*(a)|\leq 1$ for all $a\in A$. Let
$I$ be the set of those $k\in \{1,\dots,n\}$ such that $x_k^*\not\in R(y^*)$ and let $J:=\{1,\dots,n\}\setminus I$, so that for each $k\in J$
we have $x_k^*=\lambda_k y^*$ for some $\lambda_k>0$. Note that
\begin{multline*}
	\sum_{k\in I} |x_k^*(a)|+\sum_{k\in J}  |\lambda_k x^*(a)|=
	\sum_{k\in I} |x_k^*(a)|+|x^*(a)|\sum_{k\in J}  \lambda_k \\ \leq 
	\sum_{k\in I} |x_k^*(a)|+|y^*(a)|\sum_{k\in J}  \lambda_k=
	\sum_{k=1}^n |x_k^*(a)| \leq 1 \quad\mbox{for all }a\in A,
\end{multline*}
hence
$$
	\sum_{k=1}^n g(x_k^*)=
	\sum_{k\in I} f(x_k^*)+\sum_{k\in J}  f(\lambda_k x^*)
	\leq \|f\|_{FBL[\ell_1(A)]}.
$$
This shows that $\|g\|_{FBL[\ell_1(A)]} \leq \|f\|_{FBL[\ell_1(A)]}$, a contradiction.

(ii): Set $x^*:=\sum_{k=1}^n x^*_k$. By contradiction, suppose that $f(x^*)<\sum_{k=1}^n f(x_k^*)$. 
Define $g:\ell_\infty(A)\to \erre_+$ by   
$$
	\begin{cases}
		g(\lambda x^*):=\lambda \sum_{k=1}^n f(x_k^*) & \text{for all $\lambda>0$}, \\
		g(z^*):=f(z^*) & \text{for all $z^*\in \ell_\infty(A) \setminus R(x^*)$}.
	\end{cases}
$$
Clearly, $g$ is positively homogeneous, $f\leq g$ and $f\neq g$. Again by the maximality of~$f$,
to get a contradiction it suffices to show that $\|g\|_{FBL[\ell_1(A)]}=\|f\|_{FBL[\ell_1(A)]}$.
Take $y_1^*,\dots,y_m^*\in \ell_\infty(A)$ such that $\sum_{j=1}^m |y_j^*(a)|\leq 1$ for all $a\in A$. Let
$I$ denote the set of all $j\in \{1,\dots,m\}$ for which $y_j^*\not\in R(x^*)$ and let $J:=\{1,\dots,m\}\setminus I$, so that for each $j\in J$
we can write $y_j^*=\lambda_j x^*$ for some $\lambda_j>0$. Set $\mu:=\sum_{j\in J}\lambda_j$.
Since
\begin{multline*}
	\sum_{j\in I}|y_j^*(a)|+\sum_{k=1}^n|\mu x_k^*(a)|=
	\sum_{j\in I}|y_j^*(a)|+\mu x^*(a) \\ =
	\sum_{j\in I}|y_j^*(a)|+\sum_{j\in J} \lambda_j x^*(a)
	= \sum_{j=1}^m|y_j^*(a)| \leq 1 \quad\mbox{for all }a\in A,
\end{multline*}
we obtain
\begin{multline*}
	\sum_{j=1}^m g(y_j^*)
	=\sum_{j\in I}f(y_j^*) + \sum_{j\in J}\lambda_j\Big(\sum_{k=1}^n f(x_k^*)\Big)
	\\ =\sum_{j\in I} f(y_j^*)+\sum_{k=1}^n f(\mu x_k^*) \leq \|f\|_{FBL[\ell_1(A)]}.
\end{multline*}
It follows that $\|g\|_{FBL[\ell_1(A)]} \leq \|f\|_{FBL[\ell_1(A)]}$, which is a contradiction.

(iii): By Remark~\ref{rem:PointwiseBounded} we have $\|f\|_{FBL[\ell_1(A)]} \geq \|f\|_\infty$. To prove the
equality, take finitely many $x_1^*,\dots,x_n^*\in \ell_\infty(A)$
such that $\sum_{k=1}^n |x_k^*(a)|\leq 1$ for every $a\in A$. Then $x^*:=\sum_{k=1}^n |x_k^*|\in B_{\ell_\infty(A)}$ and
$$
	\sum_{k=1}^n f(x^*_k) \stackrel{{\rm (i)}}{\leq}
	\sum_{k=1}^n f(|x^*_k|) \stackrel{{\rm (ii)}}{\leq} f(x^*) \leq \|f\|_\infty.
$$
This shows that $\|f\|_{FBL[\ell_1(A)]} \leq \|f\|_\infty$ and finishes the proof.
\end{proof}

\begin{lem}\label{lem:normoflinear}
Let $A$ be a non-empty set and let $\phi:\ell_\infty(A) \to \mathbb{R}$ be a linear functional. Define $g_\phi:\ell_\infty(A) \to \erre_+$ by
$$
	g_\phi(x^*) := |\phi(|x^*|)|
	\quad\mbox{ for all }x^*\in \ell_\infty(A). 
$$
Then $g_\phi\in H[\ell_1(A)]_+$ and 
$$
	\|g_\phi\|_{FBL[\ell_1(A)]} = \sup\big\{|\phi(x^*)|: \, x^*\in B_{\ell_\infty(A)}\big\}.
$$
\end{lem}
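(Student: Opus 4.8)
The plan is to reduce everything to the explicit formula for $\|\cdot\|_{FBL[\ell_1(A)]}$ recorded in Corollary~\ref{c:FBL(A)}(i). First I would dispose of the membership claim: for $\lambda>0$ one has $|\lambda x^*|=\lambda|x^*|$, so $g_\phi(\lambda x^*)=|\phi(\lambda|x^*|)|=\lambda|\phi(|x^*|)|=\lambda g_\phi(x^*)$, and $g_\phi\geq 0$ by construction; hence $g_\phi\in H[\ell_1(A)]_+$. Writing $M:=\sup\{|\phi(x^*)|:x^*\in B_{\ell_\infty(A)}\}$ (which equals the operator norm of~$\phi$, possibly $+\infty$), it then remains to prove the two inequalities $\|g_\phi\|_{FBL[\ell_1(A)]}\geq M$ and $\|g_\phi\|_{FBL[\ell_1(A)]}\leq M$.

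For the lower bound, fix $x^*\in B_{\ell_\infty(A)}$ and split it into its positive and negative parts, $x^*=x^*_+-x^*_-$, so that $|x^*|=x^*_++x^*_-$. The key point is that at each coordinate $a\in A$ at most one of $x^*_+(a),x^*_-(a)$ is nonzero, whence $|x^*_+(a)|+|x^*_-(a)|=|x^*(a)|\leq 1$; thus the pair $(x^*_+,x^*_-)$ is admissible in the formula of Corollary~\ref{c:FBL(A)}(i). Since $x^*_+,x^*_-\geq 0$ we have $g_\phi(x^*_\pm)=|\phi(x^*_\pm)|$, and therefore
$$
\|g_\phi\|_{FBL[\ell_1(A)]}\geq g_\phi(x^*_+)+g_\phi(x^*_-)=|\phi(x^*_+)|+|\phi(x^*_-)|\geq|\phi(x^*_+-x^*_-)|=|\phi(x^*)|.
$$
Taking the supremum over $x^*\in B_{\ell_\infty(A)}$ gives $\|g_\phi\|_{FBL[\ell_1(A)]}\geq M$.

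For the upper bound, take any admissible collection $x^*_1,\dots,x^*_n\in\ell_\infty(A)$ with $\sup_{a\in A}\sum_{k=1}^n|x^*_k(a)|\leq 1$, and choose signs $\epsilon_k\in\{-1,1\}$ with $\epsilon_k\phi(|x^*_k|)=|\phi(|x^*_k|)|$. Linearity of~$\phi$ then yields $\sum_{k=1}^n g_\phi(x^*_k)=\sum_{k=1}^n\epsilon_k\phi(|x^*_k|)=\phi\big(\sum_{k=1}^n\epsilon_k|x^*_k|\big)$. The element $y^*:=\sum_{k=1}^n\epsilon_k|x^*_k|$ satisfies $|y^*(a)|\leq\sum_{k=1}^n|x^*_k(a)|\leq 1$ for every $a\in A$, i.e.\ $y^*\in B_{\ell_\infty(A)}$, so $\sum_{k=1}^n g_\phi(x^*_k)=\phi(y^*)\leq M$. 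Taking the supremum over all admissible collections gives $\|g_\phi\|_{FBL[\ell_1(A)]}\leq M$, completing the argument.

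There is no genuine obstacle here; the only mildly non-obvious step is the lower bound, where one must realize that $g_\phi$ does \emph{not} directly detect the signed value $\phi(x^*)$ (it only evaluates $\phi$ on $|x^*|\geq 0$), so one has to feed the positive and negative parts of~$x^*$ separately into the two-term instance of the norm formula and then recombine them via the triangle inequality. Both arguments go through verbatim whether or not $\phi$ is bounded, so no continuity hypothesis on~$\phi$ is needed.
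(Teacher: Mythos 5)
Your proof is correct and follows essentially the same route as the paper: the upper bound via the sign choice $\epsilon_k$ and the admissibility of $\sum_k\epsilon_k|x_k^*|$, and the lower bound by feeding the pair $\bigl((x^*)^+,(x^*)^-\bigr)$ into the norm formula and recombining with the triangle inequality. No issues.
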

\begin{proof}
Clearly, $g_\phi$ is positively homogeneous. Take any $x^*_1,\ldots,x^*_n\in \ell_\infty(A)$ such that 
$\sum_{k=1}^n|x^*_k(a)|\leq 1$ for all $a\in A$. For each $k\in \{1,\dots,n\}$, let $\varepsilon_k \in \{-1,1\}$ be the sign of $\phi(|x^*_k|)$.
Then $\sum_{k=1}^n  \varepsilon_k|x^*_k| \in B_{\ell_\infty(A)}$ and so
$$
	\sum_{k=1}^n g_\phi(x^*_k) = \sum_{k=1}^n \varepsilon_k \phi(|x^*_k|) 
	= \phi\left(\sum_{k=1}^n  \varepsilon_k|x^*_k|\right) \leq \sup\big\{|\phi(x^*)|: \, x^*\in B_{\ell_\infty(A)}\big\}=:\alpha.
$$
This immediately shows that $\|g_\phi\|_{FBL[\ell_1(A)]}\leq \alpha$. 

For the converse, pick $x^*\in B_{\ell_\infty(A)}$ and write it as $x^* = (x^*)^+ - (x^*)^-$, 
the difference of its positive and negative parts. Since $|(x^*)^+(a)| + |(x^*)^-(a)|=|x^*(a)|\leq 1$ for all $a\in A$, we have
$$
	|\phi(x^*)| \leq |\phi((x^*)^+)| + |\phi((x^*)^-)| = g_\phi((x^*)^+) + g_\phi((x^*)^-) \leq \|g_\phi\|_{FBL[\ell_1(A)]}.
$$ 
This proves that $\alpha\leq \|g_\phi\|_{FBL[\ell_1(A)]}$.
\end{proof}

\begin{lem}\label{lem:MaximalNecessaryCondition}
Let $A$ be a non-empty set and let $f \in H_0[\ell_1(A)]_+$ be maximal. 
Then there exists $\phi\in\ell_\infty(A)^*$ such that $f=g_\phi$.
\end{lem}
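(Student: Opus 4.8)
The plan is to use the structural properties of maximal elements recorded in Lemma~\ref{lem:PropertiesMaximal} to manufacture, via a supporting hyperplane argument, a positive functional $\phi$ for which $g_\phi$ dominates $f$ without increasing the norm; maximality will then force $f=g_\phi$.

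First I would reduce everything to the positive cone $C:=\ell_\infty(A)_+$. By Lemma~\ref{lem:PropertiesMaximal}(i), $f(x^*)=f(|x^*|)$, so $f$ is completely determined by its restriction $F:=f|_{C}$. By positive homogeneity and Lemma~\ref{lem:PropertiesMaximal}(ii), $F$ is positively homogeneous and superadditive, hence \emph{concave} on the convex cone $C$ (indeed $F(\lambda x^*+(1-\lambda)y^*)\ge F(\lambda x^*)+F((1-\lambda)y^*)=\lambda F(x^*)+(1-\lambda)F(y^*)$). Writing $\mathbf{1}\in\ell_\infty(A)$ for the constant function $1$, Lemma~\ref{lem:PropertiesMaximal}(i) gives $f(x^*)=f(|x^*|)\le f(\mathbf{1})$ for every $x^*\in B_{\ell_\infty(A)}$, so that $\|f\|_\infty=f(\mathbf{1})$; combined with Lemma~\ref{lem:PropertiesMaximal}(iii) this yields $\|f\|_{FBL[\ell_1(A)]}=f(\mathbf{1})=:\alpha$. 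The goal is now to produce a positive linear functional $\phi\in\ell_\infty(A)^*$ with $\phi(x^*)\ge f(x^*)$ for all $x^*\in C$ and $\phi(\mathbf{1})=\alpha$.

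The key step is the construction of $\phi$ as a supergradient of the concave function $F$ at $\mathbf{1}$. The point $\mathbf{1}$ is interior to $C$ (if $\|x^*-\mathbf{1}\|_\infty<1$ then $x^*\ge 0$), and on the open ball $U:=\{x^*:\|x^*-\mathbf{1}\|_\infty<1\}\subset C$ the concave function $F$ is bounded above (by monotonicity and homogeneity $F(x^*)\le\alpha\|x^*\|_\infty<2\alpha$), hence continuous. A supporting hyperplane for the convex hypograph of $F$ at the boundary point $(\mathbf{1},\alpha)$ --- equivalently, Hahn--Banach applied to separate $(\mathbf{1},\alpha)$ from the nonempty open convex set $\{(x^*,t):x^*\in U,\ t<F(x^*)\}$ --- produces a functional $\phi\in\ell_\infty(A)^*$ with $F(x^*)\le\alpha+\phi(x^*-\mathbf{1})$ for $x^*\in U$. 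I would then propagate this affine majorization from $U$ to all of $C$: for $x^*\in C$ and small $t>0$ one has $(1-t)\mathbf{1}+tx^*\in U$, and comparing the concavity estimate $F((1-t)\mathbf{1}+tx^*)\ge(1-t)\alpha+tF(x^*)$ with the supergradient bound and dividing by $t$ gives $F(x^*)\le\alpha+\phi(x^*-\mathbf{1})$ for every $x^*\in C$. Finally, testing this inequality on $\lambda\mathbf{1}$ for $\lambda>0$ and using $F(\lambda\mathbf{1})=\lambda\alpha$ (Euler's relation for the homogeneous $F$) forces $\phi(\mathbf{1})=\alpha$, whence $\phi(x^*)\ge F(x^*)=f(x^*)$ for all $x^*\in C$; in particular $\phi\ge f\ge 0$ on $C$, so $\phi$ is a positive functional.

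To conclude, I would invoke maximality. Since $\phi$ is positive with $\phi(\mathbf{1})=\alpha$, its norm is attained at $\mathbf{1}$, so $\sup\{|\phi(x^*)|:x^*\in B_{\ell_\infty(A)}\}=\phi(\mathbf{1})=\alpha$, and Lemma~\ref{lem:normoflinear} gives $\|g_\phi\|_{FBL[\ell_1(A)]}=\alpha=\|f\|_{FBL[\ell_1(A)]}$; in particular $g_\phi\in H_0[\ell_1(A)]_+$. Moreover, for every $x^*\in\ell_\infty(A)$ we have $g_\phi(x^*)=|\phi(|x^*|)|=\phi(|x^*|)\ge f(|x^*|)=f(x^*)$, so $g_\phi\ge f$. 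As $f$ is maximal and $g_\phi\ge f$ has the same $FBL[\ell_1(A)]$-norm, we conclude $f=g_\phi$. The main obstacle is the supporting-functional step: everything hinges on recognizing that $\mathbf{1}$ lies in the norm-interior of the positive cone and that the concave $F$ is locally bounded (hence continuous) there, which is exactly what makes the Hahn--Banach separation yield a \emph{continuous} supergradient; the remaining manipulations (Euler's relation, positivity, and the final appeal to maximality) are routine.
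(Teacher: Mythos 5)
Your proof is correct, and while it rests on the same two pillars as the paper's argument --- the structural facts of Lemma~\ref{lem:PropertiesMaximal} and a Hahn--Banach separation --- the separation is set up quite differently. The paper normalizes $\|f\|_{FBL[\ell_1(A)]}=1$, observes that the superlevel set $C=\{x^*\in\ell_\infty(A)_+:f(x^*)>1\}$ is convex by superadditivity and disjoint from the open unit ball $U$ because $\|f\|_\infty=\|f\|_{FBL[\ell_1(A)]}$, and separates $C$ from $U$ inside $\ell_\infty(A)$; the functional obtained this way is then shown to dominate $f$ by rescaling points of $\ell_\infty(A)_+$ into $C$. You instead build a supergradient of the concave restriction $F=f|_{\ell_\infty(A)_+}$ at the order unit $\mathbf{1}$, separating the point $(\mathbf{1},\alpha)$ from the open hypograph in $\ell_\infty(A)\times\erre$, and then use concavity plus Euler's relation for the homogeneous $F$ to globalize the affine bound and pin down $\phi(\mathbf{1})=\alpha$. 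Your route costs you an extra verification --- you must check that $F$ is continuous near $\mathbf{1}$ so that the hypograph is open, and here your stated justification is slightly off: for a \emph{concave} function the relevant hypothesis is local boundedness \emph{below} (equivalently local boundedness), which holds since $0\le F\le 2\alpha$ on your ball, so the conclusion stands but the reason should be corrected. In exchange, your argument makes the positivity of $\phi$ and the identity $\|\phi\|=\phi(\mathbf{1})=\alpha$ completely explicit, which the paper's proof leaves implicit in the separation inequality; both proofs then finish identically by comparing $g_\phi\ge f$ with equal norms and invoking maximality. You may also wish to dispose of the degenerate case $\alpha=0$ in one line (as the paper does with $f=0$), although your construction degenerates gracefully there.
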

\begin{proof}
The case $f=0$ being trivial, we can suppose without loss of generality that $\|f\|_{FBL[\ell_1(A)]} = 1$. 
The set
$$
	C:= \{x^*\in \ell_\infty(A)_+ :\, f(x^*)>1\}
$$ 
is convex as a consequence of Lemma~\ref{lem:PropertiesMaximal}(ii). Let $U$ be the open
unit ball of~$\ell_\infty(A)$. Since $\|f\|_\infty = \|f\|_{FBL[\ell_1(A)]} = 1$
(Lemma~\ref{lem:PropertiesMaximal}(iii)), we have $C\cap U=\emptyset$.
As an application of the Hahn-Banach separation theorem (cf. \cite[Proposition~2.13(ii)]{fab-ultimo}),
there is $\phi\in \ell_\infty(A)^*$ such that
\begin{equation}\label{eqn:HB}
	\phi(y^*) < \inf \{\phi(x^*) :\,  x^* \in C\}
	\quad\mbox{for all }y^*\in U.
\end{equation}
We can suppose that $\|\phi\|=1$ and so $\|g_\phi\|_{FBL[\ell_1(A)]}=1$ (Lemma~\ref{lem:normoflinear}). 

We claim that $f=g_\phi$. Indeed, since $f$ is maximal, it suffices to prove that $f(x^*)\leq g_\phi(x^*)$ for every
$x^*\in \ell_\infty(A)$ with $f(x^*)>0$. Fix $t>1$. By Lemma~\ref{lem:PropertiesMaximal}(i), we have
$f(|x^*|)\geq f(x^*)>0$ and so
$$
	f\left(\frac{t}{f(|x^*|)}|x^*|\right) = t > 1.
$$ 
Therefore, \eqref{eqn:HB} yields
$$
	\phi\left(\frac{t}{f(|x^*|)}|x^*|\right) \geq \sup \{|\phi(y^*)|:\, y^*\in U\}=\|\phi\|=1.
$$
We conclude that $t\phi(|x^*|) \geq f(|x^*|) $ for any $t>1$, so $g_\phi(|x^*|)=\phi(|x^*|)\geq f(x^*)$.
The proof is complete.
\end{proof}

Given any non-empty set~$A$, it is well-known that every $\phi\in \ell_\infty(A)^*$
can be written in a unique way as $\phi = \phi_0 + \phi_1$, where 
\begin{itemize}
\item $\phi_0 \in \ell_1(A)$ (identified as a subspace of~$\ell_\infty(A)^*$), 
\item $\phi_1 \in \ell_\infty(A)^*$ vanishes on all finitely supported elements of~$\ell_\infty(A)$.
\end{itemize}
Moreover, $\|\phi\| = \|\phi_0\| + \|\phi_1\|$. 

\begin{lem}\label{lem:CountableSum}
Let $A$ be a non-empty set and $\phi\in \ell_1(A)$. Then $g_\phi \in FBL[\ell_1(A)]$.
\end{lem}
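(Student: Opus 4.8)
The plan is to first dispose of the case where $\phi$ has finite support, and then to reach the general case by a truncation and approximation argument, exploiting the fact that the support of any element of $\ell_1(A)$ is countable. Throughout, recall that $|\delta_{e_a}|(x^*)=|x^*(a)|$ for every $x^*\in\ell_\infty(A)$, so that for a finitely supported $\phi=\sum_{a\in F}\phi(a)e_a$ the element $h:=\sum_{a\in F}\phi(a)\,|\delta_{e_a}|$ lies in the sublattice of $H_0[\ell_1(A)]$ generated by $\{\delta_{e_a}:a\in A\}$ (it is a finite linear combination of the moduli $|\delta_{e_a}|=\delta_{e_a}\vee(-\delta_{e_a})$, and a vector sublattice is closed under such operations). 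Since $h(x^*)=\sum_{a\in F}\phi(a)\,|x^*(a)|=\phi(|x^*|)$, we get $g_\phi=|h|$, which therefore belongs to the sublattice and a fortiori to $FBL[\ell_1(A)]$ by Corollary~\ref{c:FBL(A)}(ii).

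For the general case, write the (countable) support of $\phi$ as $\{a_n:n\in\nat\}$ and set $\phi_N:=\sum_{n=1}^{N}\phi(a_n)e_{a_n}$, so that $\|\phi-\phi_N\|_{\ell_1(A)}\to 0$. Each $g_{\phi_N}$ belongs to $FBL[\ell_1(A)]$ by the finitely supported case, so since $FBL[\ell_1(A)]$ is closed it will suffice to prove that $g_{\phi_N}\to g_\phi$ in $\|\cdot\|_{FBL[\ell_1(A)]}$. The crucial point is the pointwise domination
$$
	|g_\phi(x^*)-g_{\phi_N}(x^*)|=\big|\,|\phi(|x^*|)|-|\phi_N(|x^*|)|\,\big|\leq |(\phi-\phi_N)(|x^*|)|=g_{\phi-\phi_N}(x^*),
$$
valid for all $x^*\in\ell_\infty(A)$, which follows from the elementary inequality $\big||a|-|b|\big|\leq|a-b|$.

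Because $\|\cdot\|_{FBL[\ell_1(A)]}$ is monotone with respect to pointwise domination of absolute values (directly from the defining formula in Corollary~\ref{c:FBL(A)}(i)), the displayed inequality gives $\|g_\phi-g_{\phi_N}\|_{FBL[\ell_1(A)]}\leq \|g_{\phi-\phi_N}\|_{FBL[\ell_1(A)]}$. Finally, Lemma~\ref{lem:normoflinear} applied to the functional $\phi-\phi_N\in\ell_1(A)\sub\ell_\infty(A)^*$ yields
$$
	\|g_{\phi-\phi_N}\|_{FBL[\ell_1(A)]}=\sup\big\{|(\phi-\phi_N)(x^*)|:\,x^*\in B_{\ell_\infty(A)}\big\}=\|\phi-\phi_N\|_{\ell_1(A)}\longrightarrow 0,
$$
completing the argument. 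The only mildly delicate step is the comparison between $g_\phi$ and its truncations, where the outer modulus in the definition of $g_\phi$ prevents $g_\phi-g_{\phi_N}$ from being literally of the form $g_{\psi}$; this is precisely what the pointwise inequality above is designed to circumvent, after which everything reduces to the already established norm identity of Lemma~\ref{lem:normoflinear}.
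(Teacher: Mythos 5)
Your proof is correct and follows essentially the same route as the paper: the paper sums the absolutely convergent series $\sum_{a\in A}\phi(a)|\delta_{e_a}|$ in $FBL[\ell_1(A)]$ and takes the modulus of the sum, while you take moduli of the partial sums first and control the tail via the pointwise bound $|g_\phi-g_{\phi_N}|\leq g_{\phi-\phi_N}$ together with Lemma~\ref{lem:normoflinear}. The resulting error estimate $\|\phi-\phi_N\|_{\ell_1(A)}\to 0$ is the same one the paper gets from the triangle inequality, so the difference is only in bookkeeping.
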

\begin{proof} Let $(e_a)_{a\in A}$ be the unit vector basis of~$\ell_1(A)$.
The series $\sum_{a\in A} \phi(a) |\delta_{e_a}|$ is summable in $FBL[\ell_1(A)]$
because $\phi\in \ell_1(A)$ and $\|\delta_{e_a}\|_{FBL[\ell_1(A)]}=1$ for every $a\in A$.
Let $h \in FBL[\ell_1(A)]$ be its sum. By Remark~\ref{rem:PointwiseBounded}, we have
$$
	h(x^*) = \sum_{a\in A} \phi(a) |x^*(a)| 
	\quad\mbox{for all }x^*\in \ell_\infty(A).
$$
Therefore, $|h(x^*)|=g_\phi(x^*)$ for all $x^*\in \ell_\infty(A)$
and so $g_\phi\in FBL[\ell_1(A)]$.
\end{proof}

\begin{lem}\label{lem:continuity}
Let $A$ be a non-empty set, $\xi: B_{\ell_\infty(A)} \to \erre$ a $w^*$-continuous function and $\phi\in \ell_\infty(A)^*$. 
If $\xi \leq g_\phi$ on~$B_{\ell_\infty(A)}$, then $\xi \leq g_{\phi_0}$ on~$B_{\ell_\infty(A)}$ as well.
\end{lem}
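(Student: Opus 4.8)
The plan is to exploit that finitely supported elements are $w^*$-dense in $B_{\ell_\infty(A)}$ and that on such elements $g_\phi$ and $g_{\phi_0}$ coincide, since $\phi_1$ annihilates finitely supported vectors. The subtlety to keep in mind is that $g_{\phi_0}$ is \emph{not} $w^*$-continuous in general (the map $x^* \mapsto |x^*|$ is already not $w^*$-continuous), so an arbitrary approximating net will not suffice; instead I would work with the explicit net of truncations, whose behaviour under $g_{\phi_0}$ can be controlled by hand using $\phi_0 \in \ell_1(A)$.

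First I would fix $x^* \in B_{\ell_\infty(A)}$ and, for each finite set $F \sub A$, let $x_F^* \in \ell_\infty(A)$ be the vector that agrees with $x^*$ on $F$ and vanishes off~$F$. Each $x_F^*$ lies in $B_{\ell_\infty(A)}$ and is finitely supported. Directing the finite subsets of~$A$ by inclusion, I would check that $x_F^* \to x^*$ in the $w^*$-topology of $\ell_\infty(A)=\ell_1(A)^*$: for every $\psi\in \ell_1(A)$ one has $\langle \psi,x_F^*\rangle = \sum_{a\in F}\psi(a)x^*(a)$, and these partial sums converge to $\sum_{a\in A}\psi(a)x^*(a)=\langle \psi,x^*\rangle$ because the series is absolutely convergent (as $\psi\in \ell_1(A)$ and $\|x^*\|_\infty\leq 1$).

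Next, for each finite~$F$ the vector $|x_F^*|$ is finitely supported, so $\phi_1(|x_F^*|)=0$ and hence
$g_\phi(x_F^*)=|\phi(|x_F^*|)|=|\phi_0(|x_F^*|)|=g_{\phi_0}(x_F^*)$. Combining this with the hypothesis $\xi\leq g_\phi$ on $B_{\ell_\infty(A)}$ gives $\xi(x_F^*)\leq g_{\phi_0}(x_F^*)$ for every~$F$. I would then pass to the limit along the net: the left-hand side tends to $\xi(x^*)$ by the $w^*$-continuity of~$\xi$, while the right-hand side tends to $g_{\phi_0}(x^*)$, since $g_{\phi_0}(x_F^*)=\bigl|\sum_{a\in F}\phi_0(a)|x^*(a)|\bigr|$ and the series $\sum_{a\in A}\phi_0(a)|x^*(a)|$ converges absolutely. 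This yields $\xi(x^*)\leq g_{\phi_0}(x^*)$, and as $x^*\in B_{\ell_\infty(A)}$ was arbitrary the lemma follows.

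The main obstacle is precisely the failure of $w^*$-continuity of $g_{\phi_0}$: the argument cannot rely on an abstract dense net but must use the concrete truncation net, whose convergence under $g_{\phi_0}$ is governed by the absolute summability coming from $\phi_0\in \ell_1(A)$. Everything else reduces to the two routine facts that truncations approximate $x^*$ in the $w^*$-topology and that $\phi_1$ vanishes on finitely supported elements.
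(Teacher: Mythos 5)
Your argument is correct and follows the same basic route as the paper (reduce to finitely supported elements, where $\phi_1$ vanishes, and pass to the limit), but the ``main obstacle'' you identify is illusory: $g_{\phi_0}$ \emph{is} $w^*$-continuous on $B_{\ell_\infty(A)}$. Indeed, the $w^*$-topology on $B_{\ell_\infty(A)}=[-1,1]^A$ coincides with the product (pointwise) topology, the map $x^*\mapsto |x^*|$ acts coordinatewise and is therefore continuous on this ball, and $\phi_0\in\ell_1(A)$ is $w^*$-continuous; the paper composes these three observations and concludes at once from the $w^*$-density of the finitely supported elements, with no need to single out a particular approximating net. Your explicit computation with the truncation net $x_F^*$ is a correct hands-on verification of this continuity at the point $x^*$ along one specific net (and the inequality does pass to the limit along it, so your proof is complete), but the extra care is unnecessary, and the claim that $x^*\mapsto|x^*|$ fails to be $w^*$-continuous on the ball should be dropped --- it is only on unbounded sets that such continuity can fail.
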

\begin{proof} The $w^*$-topology on~$B_{\ell_\infty(A)}=[-1,1]^A$ agrees with the pointwise topology.
Since the map $x^* \mapsto |x^*|$ is $w^*$-$w^*$-continuous when restricted to~$B_{\ell_\infty(A)}$ and $\phi_0$ is $w^*$-continuous, 
we have that $g_{\phi_0}$ is $w^*$-continuous on~$B_{\ell_\infty(A)}$.
On the other hand, if $x^*\in B_{\ell_\infty(A)}$ is finitely supported, then $\phi_1(|x^*|) = 0$ and, therefore, we have $\xi(x^*) \leq g_{\phi}(x^*) 
= |\phi_0(|x^*|)|=g_{\phi_0}(x^*)$. 
Since the finitely supported elements of~$B_{\ell_\infty(A)}$ are $w^*$-dense and the functions $\xi$ and $g_{\phi_0}$ are $w^*$-continuous on~$B_{\ell_\infty(A)}$, 
we conclude that $\xi \leq g_{\phi_0}$ on~$B_{\ell_\infty(A)}$.
\end{proof}

\begin{lem}\label{lem:continuity2}
Let $E$ be a Banach space. Then for every $f\in FBL[E]$ the restriction $f|_{B_{E^{*}}}$ is $w^*$-continuous.
\end{lem}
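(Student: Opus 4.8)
The plan is to exploit the single domination $\|\cdot\|_\infty\leq\|\cdot\|_{FBL[E]}$ recorded in Remark~\ref{rem:PointwiseBounded}: it turns $FBL[E]$-norm approximation into \emph{uniform} approximation on $B_{E^*}$, and $w^*$-continuity is preserved under uniform limits.

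First I would observe that each generator $\delta_x$ restricts to a $w^*$-continuous function on $B_{E^*}$, which is immediate from $\delta_x(x^*)=x^*(x)$ and the very definition of the $w^*$-topology. Since the pointwise lattice operations $\vee,\wedge$, together with sums and scalar multiples, preserve continuity, every element of the (not yet closed) sublattice $\mathcal{L}$ of $H_0[E]$ generated by $\{\delta_x:x\in E\}$ also restricts to a $w^*$-continuous function on $B_{E^*}$. In other words, the restriction map $g\mapsto g|_{B_{E^*}}$ sends $\mathcal{L}$ into $C(B_{E^*},w^*)$.

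Next I would record that by Remark~\ref{rem:PointwiseBounded} we have $\|g\|_\infty\leq\|g\|_{FBL[E]}$ for every $g\in H_0[E]$, and that $\|g\|_\infty=\sup_{x^*\in B_{E^*}}|g(x^*)|$ is exactly the sup-norm of the restriction $g|_{B_{E^*}}$. Hence the restriction map is a contraction from $(H_0[E],\|\cdot\|_{FBL[E]})$ into the space of bounded functions on $B_{E^*}$ equipped with $\|\cdot\|_\infty$. Now fix $f\in FBL[E]$. By Definition~\ref{definition:FBL}, $f$ lies in the $\|\cdot\|_{FBL[E]}$-closure of $\mathcal{L}$, so there is a sequence $(g_n)$ in $\mathcal{L}$ with $\|f-g_n\|_{FBL[E]}\to 0$. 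The domination above gives $g_n|_{B_{E^*}}\to f|_{B_{E^*}}$ uniformly on $B_{E^*}$. Since each $g_n|_{B_{E^*}}$ is $w^*$-continuous and a uniform limit of continuous functions is continuous, it follows that $f|_{B_{E^*}}$ is $w^*$-continuous.

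There is no genuine obstacle here: the entire argument reduces to the inequality $\|\cdot\|_\infty\leq\|\cdot\|_{FBL[E]}$, which ensures that norm convergence in $FBL[E]$ implies uniform convergence of the restrictions on $B_{E^*}$, so that $C(B_{E^*},w^*)$, being closed for the sup-norm, contains the restriction of every element of $FBL[E]$.
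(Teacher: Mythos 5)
Your argument is correct and is essentially the same as the paper's: the paper also observes that the set of $f\in FBL[E]$ with $w^*$-continuous restriction is a sublattice containing the generators $\delta_x$ and is closed because of the domination $\|\cdot\|_\infty\leq\|\cdot\|_{FBL[E]}$ from Remark~\ref{rem:PointwiseBounded}. You have merely unpacked the same three steps (generators, lattice/linear operations, uniform limits) in more detail.
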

\begin{proof}
The set $S$ consisting of all $f\in FBL[E]$ for which $f|_{B_{E^{*}}}$ is $w^*$-continuous
is clearly a sublattice of~$FBL[E]$ containing $\{\delta_x:x\in E\}$. Moreover, $S$ is closed in $FBL[E]$ (by Remark~\ref{rem:PointwiseBounded}), 
so $FBL[E]=S$.
\end{proof}

We arrive at the main result of this section:

\begin{thm}\label{thm:Nakano}
The norm of $FBL[\ell_1(A)]$ has the strong Nakano property for any non-empty set~$A$.
\end{thm}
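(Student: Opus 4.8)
The plan is to produce the required upper bound explicitly, as an element of the form $g_{\phi_0}$ for a suitable $\phi_0\in\ell_1(A)$, by running the preceding lemmas in sequence. Let $\mathcal{F}\subseteq FBL[\ell_1(A)]_+$ be upwards directed and norm bounded, and put $\alpha:=\sup\{\|f\|_{FBL[\ell_1(A)]}:f\in\mathcal{F}\}$. First I would reduce to the pointwise supremum: since $\|\cdot\|_\infty\leq\|\cdot\|_{FBL[\ell_1(A)]}$ (Remark~\ref{rem:PointwiseBounded}), the family $\mathcal{F}$ is pointwise bounded, so the pointwise supremum $g(x^*):=\sup\{f(x^*):f\in\mathcal{F}\}$ lies in $H[\ell_1(A)]_+$ and satisfies $\|g\|_{FBL[\ell_1(A)]}=\alpha$ by Lemma~\ref{lem:PointwiseSupremum}; in particular $g\in H_0[\ell_1(A)]_+$.

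Next I would pass to a maximal dominating function and represent it. By Lemma~\ref{lem:Zorn}, choose a maximal $\tilde{g}\in H_0[\ell_1(A)]_+$ with $g\leq\tilde{g}$ and $\|\tilde{g}\|_{FBL[\ell_1(A)]}=\alpha$. By Lemma~\ref{lem:MaximalNecessaryCondition} there is $\phi\in\ell_\infty(A)^*$ with $\tilde{g}=g_\phi$, and Lemma~\ref{lem:normoflinear} gives $\|\phi\|=\|g_\phi\|_{FBL[\ell_1(A)]}=\alpha$. Using the canonical decomposition $\phi=\phi_0+\phi_1$ (with $\phi_0\in\ell_1(A)$, $\phi_1$ vanishing on finitely supported vectors, and $\|\phi\|=\|\phi_0\|+\|\phi_1\|$), I would set $y_0:=g_{\phi_0}$, which belongs to $FBL[\ell_1(A)]_+$ by Lemma~\ref{lem:CountableSum}. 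This is the candidate upper bound, and $\|y_0\|_{FBL[\ell_1(A)]}=\|\phi_0\|\leq\|\phi\|=\alpha$ by Lemma~\ref{lem:normoflinear}.

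Finally I would verify that $y_0$ is an upper bound of $\mathcal{F}$, which simultaneously forces the norm equality. Each $f\in\mathcal{F}$ satisfies $f\leq g\leq\tilde{g}=g_\phi$ pointwise, hence on $B_{\ell_\infty(A)}$. Since $f|_{B_{\ell_\infty(A)}}$ is $w^*$-continuous (Lemma~\ref{lem:continuity2}), Lemma~\ref{lem:continuity} upgrades this to $f\leq g_{\phi_0}=y_0$ on $B_{\ell_\infty(A)}$, and positive homogeneity extends the inequality to all of $\ell_\infty(A)$. Thus $y_0\geq f$ in $FBL[\ell_1(A)]$ for every $f\in\mathcal{F}$, so $\|y_0\|_{FBL[\ell_1(A)]}\geq\|f\|_{FBL[\ell_1(A)]}$ and therefore $\|y_0\|_{FBL[\ell_1(A)]}\geq\alpha$; combined with the reverse inequality above this yields $\|y_0\|_{FBL[\ell_1(A)]}=\alpha=\sup\{\|f\|:f\in\mathcal{F}\}$, as required.

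The main obstacle is precisely the upper-bound step: one must show that discarding the singular part $\phi_1$ of the representing functional does not destroy domination of $\mathcal{F}$. This is where the $w^*$-continuity of genuine elements of $FBL[\ell_1(A)]$ on the dual ball is essential. Although $\tilde{g}=g_\phi$ may be strictly larger than $g_{\phi_0}$ on non-finitely-supported vectors, every $f\in\mathcal{F}$ is $w^*$-continuous and so is already controlled by its values on the ($w^*$-dense) finitely supported vectors, on which $\phi_1$ contributes nothing; Lemma~\ref{lem:continuity} packages exactly this passage. The other point deserving care is that $g$ itself need not lie in $FBL[\ell_1(A)]$, being only a pointwise supremum and a priori merely lower semicontinuous, which is why one cannot take $y_0=g$ and must route through the maximal function and its representation.
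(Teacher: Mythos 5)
Your proposal is correct and follows essentially the same route as the paper's proof: pointwise supremum via Lemma~\ref{lem:PointwiseSupremum}, passage to a maximal dominating function via Lemma~\ref{lem:Zorn}, representation as $g_\phi$ via Lemmas~\ref{lem:MaximalNecessaryCondition} and~\ref{lem:normoflinear}, and then discarding the singular part $\phi_1$ using $w^*$-continuity (Lemmas~\ref{lem:continuity2} and~\ref{lem:continuity}) to land on the upper bound $g_{\phi_0}\in FBL[\ell_1(A)]$. The only (harmless) difference is that the paper normalizes $\sup\{\|f\|\}=1$ while you carry a general $\alpha$ and spell out the final norm equality explicitly.
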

\begin{proof}
Let $\mathcal{F} \sub FBL[\ell_1(A)]_+$ be an upwards directed family such that 
$$
	\sup\{\|f\|_{FBL[\ell_1(A)]} : \, f\in\mathcal{F}\} = 1.
$$ 
We are going to show that $\mathcal{F}$ has an upper bound of norm 1. 

Note that $\mathcal{F}$ is pointwise bounded (Remark~\ref{rem:PointwiseBounded}) and let $h:\ell_\infty(A)\to \erre_+$ be defined
as $h(x^*):=\sup\{f(x^*):f\in \mathcal{F}\}$ for all $x^*\in \ell_\infty(A)$. 
Lemma~\ref{lem:PointwiseSupremum} ensures that $h\in H_0[\ell_1(A)]_+$ and $\|h\|_{FBL[\ell_1(A)]} = 1$. 
Now let $g\in H_0[\ell_1(A)]_+$ be maximal such that 
$g\geq h$  and $\|g\|_{FBL[\ell_1(A)]}=1$ (apply Lemma~\ref{lem:Zorn}).
Then $g=g_\phi$ for some $\phi \in \ell_\infty(A)^*$ with $\|\phi\|=1$
(combine Lemmas~\ref{lem:MaximalNecessaryCondition} and~\ref{lem:normoflinear}).

Given any $f\in \mathcal{F} \sub FBL[\ell_1(A)]$, we have $f\leq g_\phi$ and the restriction $f|_{B_{\ell_\infty(A)}}$ is $w^*$-continuous 
(Lemma~\ref{lem:continuity2}), hence Lemma~\ref{lem:continuity} yields $f(x^*)\leq g_{\phi_0}(x^*)$ for every $x^*\in \ell_\infty(A)$ (bear in mind that
both $f$ and $g_{\phi_0}$ are positively homogeneous). Since $g_{\phi_0} \in FBL[\ell_1(A)]$
(Lemma~\ref{lem:CountableSum}) and $1=\|\phi\| \geq \|\phi_0\|=\|g_{\phi_0}\|_{FBL[\ell_1(A)]}$
(Lemma~\ref{lem:normoflinear}), it turns out that $g_{\phi_0}$ is the upper bound of~$\mathcal{F}$ in $FBL[\ell_1(A)]$ that we were looking for. 
The proof is finished.
\end{proof}

It is natural to wonder whether the norm of $FBL[E]$ also has the (strong) Nakano property for an arbitrary Banach space~$E$. 
We will next show that the norm of $FBL[L_1]$ is not even Fatou, where $L_1$ denotes the space $L_1([0,1],\mu)$ and
$\mu$ is the Lebesgue measure. 

The following auxiliary lemma belongs to the folklore and we include its proof for the sake of completeness.

\begin{lem}\label{l:fn}
For each $n\in\mathbb N$, define $f_n: L_1 \to \erre_+$ by
$$
	f_n(h):=\sum_{j=1}^{2^n} \Big|\int_{I_{n,j}} h \, d\mu\Big|
	\quad \mbox{for every }h\in L_1,
$$
where $I_{n,j}:=[\frac{j-1}{2^n},\frac{j}{2^n}]$ for all $j\in \{1,\dots,2^n\}$.
The following properties hold:
\begin{enumerate}
\item[(i)] $f_n(h)\leq f_{n+1}(h)$ for every $n\in\mathbb N$ and $h\in L_1$.
\item[(ii)] $\lim_{n\to \infty} f_n(h)=\int_0^1|h| \, d\mu$ for every $h\in L_1$.
\end{enumerate}
\end{lem}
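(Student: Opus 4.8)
The plan is to prove (i) directly from the dyadic refinement structure, and to prove (ii) by combining the resulting monotonicity with an \emph{exact} evaluation of $f_n$ on dyadic step functions, followed by a density argument.

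For (i), the key observation is that every level-$n$ interval splits into two consecutive level-$(n+1)$ intervals, namely $I_{n,j}=I_{n+1,2j-1}\cup I_{n+1,2j}$, the two pieces overlapping only in a single point. Hence $\int_{I_{n,j}} h\,d\mu=\int_{I_{n+1,2j-1}} h\,d\mu+\int_{I_{n+1,2j}} h\,d\mu$, and the triangle inequality gives $\abs{\int_{I_{n,j}} h\,d\mu}\leq \abs{\int_{I_{n+1,2j-1}} h\,d\mu}+\abs{\int_{I_{n+1,2j}} h\,d\mu}$. Summing over $j\in\{1,\dots,2^n\}$ and reindexing the right-hand side as a sum over all $2^{n+1}$ level-$(n+1)$ intervals yields $f_n(h)\leq f_{n+1}(h)$.

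For (ii), I would first record the trivial upper bound $f_n(h)\leq \sum_{j=1}^{2^n}\int_{I_{n,j}} \abs{h}\,d\mu=\int_0^1 \abs{h}\,d\mu$. Together with part~(i) this shows that $(f_n(h))_{n}$ is nondecreasing and bounded, so the limit exists and does not exceed $\int_0^1 \abs{h}\,d\mu$; everything then reduces to the reverse inequality. The heart of the matter is that $f_n$ is computed exactly on dyadic step functions: if $s=\sum_{k=1}^{2^m} c_k \mathbf 1_{I_{m,k}}$, then for every $n\geq m$ each $I_{m,k}$ is a disjoint union of $2^{n-m}$ level-$n$ intervals on which $s$ equals the constant $c_k$, and a direct count gives $f_n(s)=\sum_{k=1}^{2^m}\abs{c_k}/2^m=\int_0^1\abs{s}\,d\mu$. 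Thus for such $s$ the conclusion holds, with equality already from $n=m$ on. To pass to an arbitrary $h\in L_1$, I would use the estimate $\abs{f_n(h)-f_n(s)}\leq \int_0^1\abs{h-s}\,d\mu$, valid for all $n$ with a constant independent of $n$; it follows by applying the reverse triangle inequality to each term and summing. Given $\eps>0$, choosing a dyadic step function $s$ with $\int_0^1\abs{h-s}\,d\mu<\eps$ then yields $\abs{f_n(h)-\int_0^1\abs{h}\,d\mu}<2\eps$ for all $n$ large enough, which proves (ii).

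The main obstacle is the lower bound in (ii): monotonicity and the upper bound are immediate, but ruling out a strictly smaller limit requires the exact evaluation on step functions. The crucial structural facts are that dyadic step functions are dense in $L_1$ and that each $f_n$ is nonexpansive for the $L_1$-norm uniformly in $n$, so the identity $f_n=\norm{\cdot}_{L_1}$ on step functions propagates to the whole space in the limit. (Alternatively, one recognizes $f_n(h)=\norm{\mathbb E[h\mid \mathcal F_n]}_{L_1}$, where $\mathcal F_n$ is the finite algebra generated by the level-$n$ dyadic intervals, and invokes $L_1$ martingale convergence; but the density argument avoids any appeal to martingale theory.)
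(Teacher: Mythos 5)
Your proof is correct and follows essentially the same route as the paper: the monotone bounded convergence to a limit dominated by $\|h\|_1$, the exact evaluation of $f_n$ on dyadic step functions, and the uniform estimate $|f_n(h)-f_n(h')|\leq\|h-h'\|_1$ combined with density of dyadic step functions in $L_1$. The only cosmetic difference is that the paper transfers the Lipschitz bound to the limit function $\phi$ before invoking density, whereas you apply it at each level $n$; your explicit verification of (i) fills in a step the paper dismisses as straightforward.
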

\begin{proof} Part~(i) is straightforward. To prove (ii), note first that for every $h\in L_1$ 
we have $f_n(h)\leq\int_0^1|h| \, d\mu$ for all $n\in \Nat$, so the increasing sequence 
$(f_n(h))$ is bounded and converges to its supremum. Let us denote 
$$
	\phi(h):=\sup_{n\in \Nat} f_n(h)=\lim_{n\to \infty}f_n(h) \quad\mbox{for }h\in L_1.
$$
We want to show that $\phi(h)=\int_0^1|h| \, d\mu$ for every $h\in L_1$. 

Observe first that this equality is clear whenever $h$ is of the form
\begin{equation}\label{eqn:simple}
	\sum_{j=1}^{2^n} a_j \chi_{I_{n,j}}
\end{equation}
for some $n\in \Nat$ and some $a_1,\dots,a_{2^n}\in \erre$. To prove the equality for arbitrary $h\in L_1$ it 
is enough to show that $\phi: L_1\to \erre$ is $\|\cdot\|_1$-continuous (because simple functions as in~\eqref{eqn:simple} are dense in~$L_1$).
In fact, we will check that
\begin{equation}\label{eqn:continuity}
	|\phi(h)-\phi(h')|\leq \|h-h'\|_1
	\quad\mbox{for every }h,h'\in L_1.
\end{equation}
Indeed, given any $n\in \Nat$, we have
\begin{eqnarray*}
	|f_n(h)-f_n(h')| &=& \left| \sum_{j=1}^{2^n} \left( \Big|\int_{I_{n,j}} h \, d\mu\Big| -\Big|\int_{I_{n,j}} h' \, d\mu\Big| \right)	\right| \\
	&\leq& \sum_{j=1}^{2^n} \left| \Big|\int_{I_{n,j}} h \, d\mu\Big| -\Big|\int_{I_{n,j}} h' \, d\mu\Big| \right| \\
	&\leq& \sum_{j=1}^{2^n} \left| \int_{I_{n,j}} h \, d\mu -\int_{I_{n,j}} h' \, d\mu \right| \\
	&\leq& \sum_{j=1}^{2^n} \int_{I_{n,j}} |h-h'| \, d\mu=\|h-h'\|_1.
\end{eqnarray*}
As $n\in\Nat$ is arbitrary, \eqref{eqn:continuity} holds and the proof is finished.
\end{proof}

\begin{thm}\label{t:nofatou}
The norm of $FBL[L_1]$ fails the Fatou property.
\end{thm}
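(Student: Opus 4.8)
The plan is to produce an upwards directed sequence in $FBL[L_1]_+$ whose norms stay bounded by $1$, but whose supremum in $FBL[L_1]$ has norm at least $2$; this directly contradicts the Fatou equality. The building blocks come from Lemma~\ref{l:fn}. For each $n$ set $F_n:=\sum_{j=1}^{2^n}|\delta_{\chi_{I_{n,j}}}|\in FBL[L_1]$ and note that, evaluating at $x^*\in L_\infty=L_1^*$ (recall $L_\infty\subseteq L_1$ on $[0,1]$), one gets $F_n(x^*)=\sum_{j=1}^{2^n}\big|\int_{I_{n,j}}x^*\,d\mu\big|=f_n(x^*)$. Thus $(F_n)$ is pointwise increasing (Lemma~\ref{l:fn}(i)) and converges pointwise to $u(x^*):=\int_0^1|x^*|\,d\mu$ (Lemma~\ref{l:fn}(ii)). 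The defining formula of the norm gives $\|u\|_{FBL[L_1]}=1$: for an admissible family one has $\sum_k u(x_k^*)=\int_0^1\sum_k|x_k^*|\,d\mu\le 1$, with equality at the constant functional $\mathbf{1}\equiv 1$. In particular $\|F_n\|_{FBL[L_1]}\le 1$.

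The decisive feature is that $u\notin FBL[L_1]$, and the lattice supremum of the sequence will be pushed strictly above $u$. To make this supremum actually live in $FBL[L_1]$, I would cap the sequence by
$$c:=|\delta_{v_1}|+|\delta_{v_2}|\in FBL[L_1],\qquad v_1:=2\chi_{[0,1/2]},\quad v_2:=2\chi_{[1/2,1]},$$
and set $g_n:=F_n\wedge c\in FBL[L_1]_+$. Then $(g_n)$ is still increasing, satisfies $g_n\le F_n\le u$ so that $\|g_n\|_{FBL[L_1]}\le 1$, and converges pointwise to $p:=u\wedge c$. On the other hand, testing $c$ against the single admissible functional $\mathbf 1$ gives $\|c\|_{FBL[L_1]}\ge c(\mathbf 1)=|\int_0^1 v_1\,d\mu|+|\int_0^1 v_2\,d\mu|=2$.

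The heart of the matter is to show that $c=\sup_n g_n$ in $FBL[L_1]$. That $c$ is an upper bound is clear. For minimality, let $y\in FBL[L_1]$ be any upper bound, so $y\ge g_n$ pointwise for all $n$, hence $y\ge p=u\wedge c$ pointwise. I would upgrade this to $y\ge c$ using Lemma~\ref{lem:continuity2}. Fix $x^*$ with $\|x^*\|_\infty\le 1$ and, on the dyadic partition $\{I_{m,j}\}$ (which refines $\{[0,1/2],[1/2,1]\}$), replace $x^*$ by a function $z_m^*$ of constant modulus $4$ having, on each cell $I_{m,j}$, the same integral as $x^*$. Then $z_m^*\to x^*$ in the $w^*$ topology (the differences are bounded and orthogonal to the partition, so one invokes $L_1$-martingale convergence exactly as in Lemma~\ref{l:fn}), the $z_m^*$ stay in $4B_{L_\infty}$, and, since the partition refines $\{[0,1/2],[1/2,1]\}$, one has $c(z_m^*)=c(x^*)$ for every $m$ while $\int_0^1|z_m^*|\,d\mu=4>c(x^*)$. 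Hence $p(z_m^*)=c(x^*)$. As $y$ is $w^*$-continuous on $4B_{L_\infty}$ (Lemma~\ref{lem:continuity2} together with positive homogeneity) and $y\ge p$, passing to the limit yields $y(x^*)=\lim_m y(z_m^*)\ge\lim_m p(z_m^*)=c(x^*)$. By homogeneity $y\ge c$ everywhere, so $c$ is the least upper bound.

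With this in hand the conclusion is immediate: $\mathcal F=\{g_n:n\in\mathbb N\}$ is upwards directed with supremum $c\in FBL[L_1]$, yet $\sup_n\|g_n\|_{FBL[L_1]}\le 1<2\le\|c\|_{FBL[L_1]}$, so the Fatou equality fails. I expect the main obstacle to be precisely the supremum identification $c=\sup_n g_n$: one must engineer the $w^*$-approximating functionals $z_m^*$ so that their $L_1$-mass overtakes $c(x^*)$ while remaining norm-bounded, since the $w^*$-continuity furnished by Lemma~\ref{lem:continuity2} is only available on bounded sets, and one must justify the $w^*$ convergence through the conditional-expectation argument underlying Lemma~\ref{l:fn}.
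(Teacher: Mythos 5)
Your argument is correct and follows essentially the same route as the paper's proof: both truncate the increasing sequence $f_n$ of Lemma~\ref{l:fn} by a positive element of norm greater than $1$, and identify that element as the lattice supremum of the truncations by perturbing a test functional so that $\int_0^1|\cdot|\,d\mu$ overtakes the cap while converging back in the $w^*$-topology inside a bounded set, then invoking Lemma~\ref{lem:continuity2}. The only differences are in the details: the paper caps by an \emph{arbitrary} $g\in FBL[L_1]_+$ with $\|g\|_{FBL[L_1]}>1$ and perturbs via Rademacher bumps $h+Kr_j$ (using the $w^*$-continuity of $g$ itself to control the cap's value along the perturbation), whereas you choose a concrete cap $c=|\delta_{v_1}|+|\delta_{v_2}|$ whose value is \emph{exactly} preserved by your $\pm4$-valued, conditional-expectation-matched perturbations $z_m^*$; both versions work.
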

\begin{proof} We use the notation of Lemma~\ref{l:fn}. By considering the natural
inclusion of~$L_\infty=L_1^*$ in~$L_1$, each $f_n$ can be seen as an element of $H[L_1]_+$. In fact, we have
$f_n\in FBL[L_1]_+$ and $\|f_n\|_{FBL[L_1]}=1$, because
$$
	f_n=\sum_{j=1}^{2^n}|\delta_{\chi_{I_{n,j}}}|
$$
and 
$$
	1=f_n(\chi_{[0,1]}) \leq \|f_n\|_\infty \leq \|f_n\|_{FBL[L_1]}
	\leq 
	\sum_{j=1}^{2^n}\|\delta_{\chi_{I_{n,j}}}\|_{FBL[L_1]}=
	\sum_{j=1}^{2^n}\|\chi_{I_{n,j}}\|_{1} = 1.
$$

Fix any $g\in FBL[L_1]_+$ with $\|g\|_{FBL[L_1]}>1$, and let $\tilde{f}_n:=g\wedge f_n \in FBL[L_1]_+$ for all $n\in \Nat$.  
The sequence $(\tilde f_n)$ is increasing (by Lemma \ref{l:fn}), bounded above by~$g$ and 
$$
	\sup_{n\in \Nat}\|\tilde f_n\|_{FBL[L_1]}\leq \sup_{n\in \Nat}\| f_n\|_{FBL[L_1]} =1.
$$

We claim that $\sup_{n\in \Nat} \tilde f_n= g$ in $FBL[L_1]$. 
Indeed, fix $h\in L_\infty$ and let 
$$
	K:=\|g\|_{FBL[L_1]}\|h\|_\infty+\|h\|_1+1.
$$ 
Define 
$$
	h_j:=h+K r_j \in L_\infty \quad\mbox{for all }j\in \Nat
$$ 
(where $r_j$ denotes the $j$-th Rademacher function)
and observe that the sequence $(h_j)$ is $w^*$-convergent to~$h$ 
(as $(r_j)$ is $w^*$-null in~$L_\infty$). Since $g:L_\infty \to \erre_+$ is $w^*$-continuous on bounded sets (Lemma~\ref{lem:continuity2}), 
we have $g(h_j)\to g(h)$ as $j\to \infty$, so in particular there is $j_0\in \Nat$ 
such that for every $j\geq j_0$ we have
\begin{multline}\label{eqn:K}
	g(h_j)\leq g(h)+1 \leq \|g\|_{FBL[L_1]}\|h\|_\infty+1=  K-\|h\|_1\leq\int_0^1|h_j|d\mu.
\end{multline}
(the second inequality being a consequence of Remark~\ref{rem:PointwiseBounded}).

Now take any $\varphi\in FBL[L_1]$ satisfying $\varphi\geq \tilde f_n$ for every $n\in\mathbb N$. By Lemma \ref{l:fn}, for every $j\geq j_0$ we have
$$
	\varphi(h_j)\geq \sup_{n\in \Nat} \tilde f_n(h_j)=g(h_j)\wedge\int_0^1|h_j|d\mu\stackrel{\eqref{eqn:K}}{=}g(h_j).
$$
Since $\varphi$ and $g$ are $w^*$-continuous on bounded sets (Lemma~\ref{lem:continuity2}), it follows that $\varphi(h)\geq g(h)$.

Therefore, we have $\sup_{n\in \Nat} \tilde f_n= g$ in $FBL[L_1]$. 
Since $\sup_{n\in \Nat}\|\tilde f_n\|_{FBL[L_1]}\leq 1$ and $\|g\|_{FBL[L_1]}>1$, the Fatou property cannot hold.
\end{proof}

\section{An application to weakly compactly generated Banach lattices}\label{s:wcg}

The purpose of this section is to give a negative answer to Diestel's question mentioned  
in the introduction (Problem~\ref{problem:Diestel}). Interestingly enough, that question can be equivalently rephrased by asking:

\begin{prob}\label{problem:AntonioPedro}
If $E$ is a WCG Banach space, is $FBL[E]$ WCG as well?
\end{prob}

The equivalence of Problems~\ref{problem:Diestel} and~\ref{problem:AntonioPedro} follows at once from the following remarks:

\begin{rem}\label{r:E-WCG-implies-FBL-LWCG}
If $E$ is a WCG Banach space, then $FBL[E]$ is LWCG. 
\end{rem}
\begin{proof}
Let $K \sub E$ be a weakly compact set such that $E=\overline{{\rm span}}(K)$. Then  
the sublattice generated by the weakly compact set $\phi_E(K)=\{\delta_x:x\in K\}$ is dense in~$FBL[E]$.
\end{proof}

\begin{rem}\label{r:}
Let $X$ be an LWCG Banach lattice. Then there exist a WCG Banach space $E$ and 
an operator $\hat T:FBL[E] \to X$ with dense range.
\end{rem}
\begin{proof}
Let $K \sub X$ be a weakly compact set such that the sublattice generated by~$K$ is dense in~$X$.
Define $E:=\overline{{\rm span}}(K) \sub X$. Then there is a lattice homomorphism $\hat T:FBL[E]\to X$ 
such that $\hat T\circ \phi_E$ is the identity on~$E$ (Theorem~\ref{t:fblb}). The sublattice generated by $E$ in~$X$ 
is contained in the range of~$\hat T$, hence the range of $\hat T$ is dense in $X$.
\end{proof}

So we might ask whether $FBL[c_0(\Gamma)]$ or $FBL[\ell_p(\Gamma)]$ ($1<p<\infty$) are WCG for uncountable~$\Gamma$. 
We will next see that $FBL[\ell_p(\Gamma)]$ is not WCG whenever $\Gamma$ is uncountable and $1<p\leq 2$, 
hence answering in the negative Diestel's question.

\begin{thm}\label{l2wcg}
Let $\Gamma$ be a non-empty set and $1< p\leq 2$. Then $FBL[\ell_p(\Gamma)]$ contains a subspace isomorphic to~$\ell_1(\Gamma)$. 
\end{thm}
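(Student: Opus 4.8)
The plan is to exhibit, for each $\gamma \in \Gamma$, an explicit positive element $F_\gamma \in FBL[\ell_p(\Gamma)]$ and a family of evaluation functionals, so that the map $(a_\gamma) \mapsto \sum_\gamma a_\gamma F_\gamma$ witnesses an isomorphic copy of $\ell_1(\Gamma)$. The natural candidate for the generators is $F_\gamma := |\delta_{e_\gamma}|$, where $(e_\gamma)_{\gamma\in\Gamma}$ is the unit vector basis of $\ell_p(\Gamma)$; concretely $F_\gamma(x^*) = |x^*(e_\gamma)| = |x^*_\gamma|$ for $x^* \in \ell_q(\Gamma) = \ell_p(\Gamma)^*$, where $q$ is the conjugate exponent. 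Each $F_\gamma$ is positive with $\|F_\gamma\|_{FBL[\ell_p(\Gamma)]} = \|e_\gamma\|_p = 1$, so the series $\sum_\gamma a_\gamma F_\gamma$ is dominated in norm by $\sum_\gamma |a_\gamma| = \|(a_\gamma)\|_1$ whenever $(a_\gamma) \in \ell_1(\Gamma)$. This gives the easy upper bound $\|\sum_\gamma a_\gamma F_\gamma\|_{FBL[\ell_p(\Gamma)]} \leq \|(a_\gamma)\|_1$, so the linear map $J:\ell_1(\Gamma) \to FBL[\ell_p(\Gamma)]$ sending $(a_\gamma)$ to $\sum_\gamma a_\gamma F_\gamma$ is a well-defined norm-one operator.

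The substance of the argument is the reverse inequality: I must produce a lower bound of the form $\|\sum_\gamma a_\gamma F_\gamma\|_{FBL[\ell_p(\Gamma)]} \geq c \,\|(a_\gamma)\|_1$ for some constant $c>0$ independent of the (finitely supported) scalars $a_\gamma$. The key idea is to test the norm against a carefully chosen finite family $x_1^*,\dots,x_n^* \in \ell_q(\Gamma)$ satisfying the admissibility constraint $\sup_{x \in B_{\ell_p(\Gamma)}} \sum_{k} |x_k^*(x)| \leq 1$, and exploit the defining supremum formula for $\|\cdot\|_{FBL[\ell_p(\Gamma)]}$. Given a finitely supported $(a_\gamma)$ with support $\{\gamma_1,\dots,\gamma_m\}$, the natural choice is to use signed multiples of the dual basis vectors: set $x_k^* := c_k \,\mathrm{sgn}(a_{\gamma_k})\, e_{\gamma_k}^*$ for suitable scalars $c_k > 0$, so that the admissibility constraint becomes a condition on $(c_k)$. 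Because the functionals $e_{\gamma_k}^*$ have disjoint supports, for $x = (x_\gamma) \in B_{\ell_p(\Gamma)}$ one computes $\sum_k |x_k^*(x)| = \sum_k c_k |x_{\gamma_k}|$, and the supremum of this over $B_{\ell_p}$ equals $\|(c_k)\|_q$ by duality. Thus admissibility reads $\|(c_k)\|_q \leq 1$, and the resulting lower bound is $\|\sum_\gamma a_\gamma F_\gamma\|_{FBL} \geq \sum_k c_k |a_{\gamma_k}|$, which upon optimizing over $\|(c_k)\|_q \leq 1$ yields $\|(a_{\gamma_k})\|_p = (\sum_k |a_{\gamma_k}|^p)^{1/p}$.

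The main obstacle is precisely that this direct approach only recovers the $\ell_p$-norm, not the $\ell_1$-norm, so a single evaluation of the supremum formula is insufficient: one cannot simultaneously make all the $c_k$ equal to $1$ while respecting $\|(c_k)\|_q \leq 1$ when $q < \infty$. The remedy, and the crux of the proof, is a \emph{spreading} or \emph{combinatorial} refinement that leverages the fact that each $F_\gamma$ is built from $|x^*_\gamma|$ with an \emph{absolute value}, and that the admissibility constraint only controls the behavior on $B_{\ell_p}$ \emph{after} summing absolute values of the $x_k^*$. The trick I expect to need is to spread each generator's mass across many test functionals using disjointly supported ``blocks'': for each $\gamma$ in the support, employ $N$ functionals each carrying a small portion, exploiting that $\sum_{k=1}^N (1/N^{1/q})^q = 1$ so that $N$ copies of $e_\gamma^*$ scaled by $N^{-1/q}$ remain admissible while contributing $N \cdot N^{-1/q} = N^{1/p}$ to the sum $\sum_k F_\gamma(x_k^*)$. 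Combining such blocks across different $\gamma$'s — crucially using disjointness of supports so that the global admissibility constraint factors coordinatewise — and then letting $N \to \infty$ or choosing $N$ optimally, should convert the $\ell_p$-estimate into an $\ell_1$-estimate, giving $\|J(a)\|_{FBL} \geq c\,\|a\|_1$ with $c$ depending only on $p$ (the restriction $p \leq 2$ presumably enters through a concavity/Khintchine-type inequality controlling cross terms when the $a_\gamma$ have varying signs). Verifying that this spread family still satisfies $\sup_{x \in B_{\ell_p}} \sum_k |x_k^*(x)| \leq 1$ while the sum $\sum_k |(\sum_\gamma a_\gamma F_\gamma)(x_k^*)|$ grows like $\|a\|_1$ is the technical heart of the matter, and is where the careful bookkeeping must be carried out.
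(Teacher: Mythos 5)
Your setup is right: the candidates $|\delta_{e_\gamma}|$ are exactly the ones the paper uses, the upper bound is immediate, and you correctly diagnose that testing against disjointly supported multiples of the dual basis vectors can only produce the $\ell_p$-norm of the coefficients. But the remedy you propose does not work, and the computation behind it is wrong. The admissibility condition is $\sup_{x\in B_{\ell_p(\Gamma)}}\sum_k|x_k^*(x)|\le 1$, not $\sum_k\|x_k^*\|_q^q\le 1$: if you take $N$ copies of $N^{-1/q}e_\gamma^*$, then $\sum_k|x_k^*(x)|=N\cdot N^{-1/q}|x_\gamma|=N^{1/p}|x_\gamma|$, whose supremum over $B_{\ell_p(\Gamma)}$ is $N^{1/p}>1$, so the family is not admissible. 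More fundamentally, for \emph{any} admissible family in which each $x_k^*$ is a multiple of a single $e_\gamma^*$, if $c_\gamma$ denotes the total mass $\sum_k\|x_k^*\|_q$ over the block assigned to $\gamma$, then admissibility forces $\|(c_\gamma)\|_q\le 1$ and the quantity you extract is $\sum_\gamma c_\gamma|a_\gamma|\le\|(a_\gamma)\|_p$ by H\"older. So no amount of splitting into disjointly supported blocks can beat the $\ell_p$ bound; the feature you invoke (``the global admissibility constraint factors coordinatewise'') is precisely what dooms the approach.

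The missing idea is sign cancellation \emph{across} coordinates. For a support $A=\{\gamma_1,\dots,\gamma_m\}$ one should test against the $2^m$ functionals $x_\varepsilon^*:=2^{-m}\sum_{j=1}^m\varepsilon_j e_{\gamma_j}^*$ with $\varepsilon\in\{-1,1\}^m$: then
$$
	\sum_{\varepsilon}|x_\varepsilon^*(x)|=2^{-m}\sum_{\varepsilon}\Big|\sum_{j=1}^m\varepsilon_j x_{\gamma_j}\Big|\le\Big(\sum_{j=1}^m|x_{\gamma_j}|^2\Big)^{1/2}\le\|x\|_p
$$
(Cauchy--Schwarz over the signs, and then $p\le 2$ --- this is exactly where the hypothesis enters), while $\sum_{\varepsilon}|x_\varepsilon^*(e_{\gamma_j})|=1$ for every $j$, because the absolute value in $|\delta_{e_{\gamma_j}}|$ destroys the signs. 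Splitting the coefficients into positive and negative parts then gives the $\ell_1$ lower bound with constant $\frac12$. The paper packages this same mechanism through the universal property: it composes the formal inclusion $\ell_p\to\ell_2$ (bounded since $p\le 2$) with the Rademacher embedding $\ell_2\to L_1$, extends to a lattice homomorphism $\hat T_A$ on $FBL[\ell_p(\Gamma)]$ via Theorem~\ref{t:fblb}, and evaluates $\hat T_A^*(\chi_{[0,1]})$, using that $\hat T_A(|\delta_{e_\gamma}|)=|r_n|\equiv 1$ for $\gamma\in A$. Without some such exploitation of signs your argument cannot close, and the role you tentatively assign to $p\le2$ (controlling the signs of the $a_\gamma$) is not where it actually enters.
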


\begin{proof}
Let $(e_\gamma)_{\gamma\in\Gamma}$ denote the unit vector basis of $\ell_p(\Gamma)$. 
We will prove that the family $\{|\delta_{e_\gamma}|:\gamma\in \Gamma\} \sub S_{FBL[\ell_p(\Gamma)]}$ 
is equivalent to the unit vector basis of $\ell_1(\Gamma)$. 

Let $i: \ell_p \to \ell_2$ be the formal inclusion operator (so that $\|i\|=1$)
and let $j: \ell_2 \to L_1$ be the isomorphic embedding satisfying $\|j\|=1$ and $j(e_n)=r_n$
for all $n\in \Nat$, where $(e_n)$ is the unit vector basis of~$\ell_2$ and $r_n$ denotes the $n$-th Rademacher function
(see e.g. \cite[Theorem~6.2.3]{alb-kal}). 

Given any finite set $A=\{\gamma_1,\dots,\gamma_n\} \sub \Gamma$, let $P_A:\ell_p(\Gamma)\to \ell_p$ be the operator defined by
$$
	P_A((x_\gamma)_{\gamma\in \Gamma}):=(x_{\gamma_1},x_{\gamma_2},\dots,x_{\gamma_n},0,0,\dots)
$$ 
(for $A=\emptyset$ we define $P_\emptyset:=0$) and consider
$$
	T_A:=j\circ i \circ P_A: \ell_p(\Gamma)\rightarrow L_1,
$$
so that $\|T_A\| \leq 1$. Let $\hat{T}_A:FBL[\ell_p(\Gamma)]\to L_1$ be the unique lattice homomorphism extending $T_A$, which
satisfies $\|\hat{T}_A\|=\|T_A\|$ (Theorem~\ref{t:fblb}).
Define $\xi_A^*:=(\hat{T}_A)^*(\chi_{[0,1]})\in FBL[\ell_p(\Gamma)]^*$
and note that $\|\xi_A^*\|\leq 1$. Moreover, for each $\gamma\in \Gamma$ we have
\begin{equation}\label{eqn:TA}
	\big\langle \xi_A^*,|\delta_{e_\gamma}|\big\rangle=
	\big\langle \chi_{[0,1]},\hat{T}_A(|\delta_{e_\gamma}|)\big\rangle=
	\big\langle \chi_{[0,1]},|T_A(e_\gamma)|\big\rangle
	=\left\{
	\begin{array}{ccc}
 	1 &   & \textrm{ if }\gamma\in A  \\
  	&   &   \\
 	0 &   & \textrm{ if }\gamma\not\in A.
	\end{array}
	\right.
\end{equation}

Finally, take any finite non-empty set $B \sub \Gamma$ and pick $a_\gamma\in \erre$ for each $\gamma\in B$. 
Write $B_+:=\{\gamma\in B: a_\gamma>0\}$ and $B_-:=\{\gamma\in B: a_\gamma<0\}$. 
From~\eqref{eqn:TA} it follows that
$$
	2\Big\|\sum_{\gamma\in B} a_\gamma|\delta_{e_\gamma}|\Big\|_{FBL[\ell_p(\Gamma)]}
	\geq 
	\xi_{B_+}^*\Big(\sum_{\gamma \in B} a_\gamma|\delta_{e_\gamma}|\Big) - 
	\xi_{B_-}^*\Big(\sum_{\gamma \in B} a_\gamma|\delta_{e_\gamma}|\Big)=\sum_{\gamma\in B} |a_\gamma|.
$$
This shows that $\{|\delta_{e_\gamma}|:\gamma\in \Gamma\}$ 
is equivalent to the unit vector basis of $\ell_1(\Gamma)$, hence
$\overline{{\rm span}}(\{|\delta_{e_\gamma}|:\gamma\in \Gamma\})$
is isomorphic to~$\ell_1(\Gamma)$.
\end{proof}

\begin{cor}\label{c:Diestel}
Let $\Gamma$ be an uncountable set and $1 < p \leq 2$. Then $FBL[\ell_p(\Gamma)]$ is LWCG but it is
not isomorphic to a subspace of a WCG Banach space.
\end{cor}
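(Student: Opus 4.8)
The plan is to obtain both assertions as short deductions from the structural results already in hand, the only external input being the classical fact that $\ell_1(\Gamma)$ cannot be embedded into a WCG space when $\Gamma$ is uncountable. I would first dispose of the positive (LWCG) part. Since $1<p\leq 2$, the space $\ell_p(\Gamma)$ is reflexive, and every reflexive Banach space is WCG because its closed unit ball is weakly compact and its closed linear span is the whole space. Thus Remark~\ref{r:E-WCG-implies-FBL-LWCG} applies verbatim and already gives that $FBL[\ell_p(\Gamma)]$ is LWCG, with no further computation needed.

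For the negative part I would argue by contradiction. Suppose $FBL[\ell_p(\Gamma)]$ were isomorphic to a subspace of some WCG Banach space $Y$. By Theorem~\ref{l2wcg}, $FBL[\ell_p(\Gamma)]$ contains a subspace isomorphic to $\ell_1(\Gamma)$; composing this embedding with the assumed isomorphic embedding of $FBL[\ell_p(\Gamma)]$ into $Y$ would exhibit $\ell_1(\Gamma)$ as (isomorphic to) a subspace of the WCG space $Y$. The crucial step is then to invoke the classical fact that, for uncountable $\Gamma$, the space $\ell_1(\Gamma)$ is \emph{not} isomorphic to any subspace of a WCG Banach space. This contradiction rules out the existence of such a $Y$ and finishes the proof; note that in particular it shows $FBL[\ell_p(\Gamma)]$ is not WCG, which together with the LWCG part answers Diestel's question in the negative.

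I expect the main obstacle to lie not in the logical skeleton, which is a one-line deduction once the ingredients are assembled, but in pinning down the precise hereditary form of the last ingredient and its citation. The point is that one only knows $\ell_1(\Gamma)$ sits inside a \emph{subspace} of a WCG space, so I would phrase the classical fact hereditarily: every subspace of a WCG space is weakly countably determined (Vašák), and hence weakly Lindel\"of determined, whereas $\ell_1(\Gamma)$ for uncountable $\Gamma$ fails this because its dual ball $(B_{\ell_\infty(\Gamma)},w^*)\cong[-1,1]^\Gamma$ is not a Corson compact. This is the exact juncture at which the uncountability of $\Gamma$ is used: a full cube $[-1,1]^\Gamma$ embeds into a $\Sigma$-product precisely when $\Gamma$ is countable, so the Corson obstruction forces countability and thereby delivers the contradiction. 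I would locate the statement in a standard monograph on WCG spaces (for instance the book of Fabian et al.\ already referenced in the paper) and make sure to cite the version valid for subspaces rather than for WCG spaces alone.
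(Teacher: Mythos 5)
Your proposal is correct and follows essentially the same route as the paper: both parts reduce to Remark~\ref{r:E-WCG-implies-FBL-LWCG} (via reflexivity of $\ell_p(\Gamma)$) together with Theorem~\ref{l2wcg} and the classical fact that $\ell_1(\Gamma)$ does not embed into any WCG space for uncountable $\Gamma$. The only difference is cosmetic: you justify that last fact via weak countable determinacy and the failure of $(B_{\ell_\infty(\Gamma)},w^*)\cong[-1,1]^\Gamma$ to be Corson, whereas the paper invokes the weak Lindel\"of property of subspaces of WCG spaces against Edgar's result that $\ell_1(\Gamma)$ is not weakly Lindel\"of; both are standard and valid.
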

\begin{proof}
Bear in mind that $\ell_1(\Gamma)$ does not embed isomorphically into any WCG Banach space. 
This can be deduced, for instance, from the fact that subspaces of WCG Banach spaces
are weakly Lindel\"of (see e.g. \cite[Theorem 14.31]{fab-ultimo}), 
while $\ell_1(\Gamma)$ is not weakly Lindel\"of (see e.g. \cite[Proposition~5.11]{edgar2}).
\end{proof}

We do not know whether the spaces $FBL[c_0(\Gamma)]$ or $FBL[\ell_p(\Gamma)]$ for $2<p<\infty$ and uncountable~$\Gamma$
are WCG. In fact, we do not know any non-separable Banach space $E$ for which $FBL[E]$ is WCG.

\section*{Acknowledgements}
The research of A.~Avil\'{e}s and J.~Rodr\'{i}guez was supported by 
Ministerio de Econom\'{i}a y Competitividad and FEDER (project MTM2014-54182-P)
and Fundaci\'{o}n S\'{e}neca (project 19275/PI/14).
The research of P.~Tradacete was supported by Ministerio de Econom\'{i}a y Competitividad
(projects MTM2016-75196-P and MTM2016-76808-P) and Grupo UCM 910346.

\def\cprime{$'$}

\providecommand{\MR}{\relax\ifhmode\unskip\space\fi MR }

\end{document}